\documentclass[]{interact}

\usepackage{epstopdf}

\usepackage{amsmath, amssymb, amsfonts, bm, upgreek, mathrsfs, amscd}
\usepackage{amsthm}                

\usepackage{algorithm}             
\usepackage{algcompatible}         
\usepackage{algorithmicx}          

\usepackage{graphicx}              
\usepackage{epstopdf}              
\usepackage{multirow}              
\usepackage{booktabs}              
\usepackage{array}                 
\usepackage{float}                 

\usepackage{enumitem}              
\allowdisplaybreaks
\usepackage{caption}
\usepackage{subcaption}

\usepackage{xcolor}                
\usepackage{hyperref}              
\usepackage{textcomp}              

\usepackage{appendix}              
\usepackage{manyfoot}              
\usepackage{listings}              
\usepackage{etoolbox}              
\usepackage{lipsum}                

\usepackage{easyReview}            

\usepackage[numbers,sort&compress]{natbib}
\bibpunct[, ]{[}{]}{,}{n}{,}{,}
\renewcommand\bibfont{\fontsize{10}{12}\selectfont}

\newcommand{\Idx}[1]{\mathfrak{I}_{#1}}          
\newcommand{\Rplus}[1]{\mathbb{R}_{\geq}^{#1}}  
\newcommand{\xlm}{y^{\dagger}}                  
\newcommand{\xnlm}{y^{\dagger\dagger}}          

\newcommand{\obj}[1]{\mathcal{\phi}_{#1}}              

\newcommand{\fvec}{\boldsymbol{\bm{\Phi}}} 
\newcommand{\Feas}{\Omega}                      
\newcommand{\mult}{\mathsf{\eta}}                 
\newcommand{\muc}{\upmu}                       

\newcommand{\ff}{\mathcal{F}}
\newcommand{\basin}{\mathrm{U}}

\theoremstyle{plain}
\newtheorem{theorem}{Theorem}[section]

\theoremstyle{definition}
\newtheorem{definition}[theorem]{Definition}

\theoremstyle{remark}
\newtheorem{remark}{Remark}[section]%

\newtheorem{assumption}{Assumption}

\begin{document}


\title{One-parameter Filled Function Method for Non-convex Multi-objective Optimization Problems}

\author{
\name{Bikram Adhikary\textsuperscript{a} and Md Abu Talhamainuddin Ansary\textsuperscript{a}\thanks{CONTACT\\ Bikram Adhikary. Email:bikram.adhikary333@gmail.com\\ Md Abu Talhamainuddin Ansary. Email: md.abutalha2009@gmail.com (corresponding author)}}
\affil{\textsuperscript{a}Department of Mathematics, Indian Institute of Technology Jodhpur, Jodhpur, 342030, Rajasthan, India
}
}

\maketitle

\begin{abstract}
In this paper, a new one-parameter filled function approach is developed for nonlinear multi-objective optimization. Inspired by key filled function ideas from single-objective optimization, the proposed method is adapted to the multi-objective setting. It avoids scalarization weights and does not impose any prior preference ordering among objectives. A descent-based procedure is first applied to obtain a local weak efficient solution. An associated filled function is then constructed and used to derive another local weak efficient solution that improves upon the current one. Repeating these phases drives the search toward global weak efficiency and yields an approximation of the global Pareto front, including in non-convex problems where multiple local Pareto fronts may exist. Numerical experiments on a set of test problems demonstrate the effectiveness of the proposed approach relative to existing methods.
\end{abstract}

\begin{keywords}
Multi-objective optimization; global optimization; non-convex optimization; filled function; Pareto front
\end{keywords}



\begin{amscode}
2020 MSC Classification: 90C29; 90C26; 65K10; 49M37
\end{amscode}

\section{Introduction}\label{sec_intro}

Multi-objective optimization investigates problems with several objectives that are typically in conflict. A major difficulty is to compute Pareto-optimal solutions, for which no objective can be improved without sacrificing at least one other. Such formulations arise widely in science and engineering, for example in environmental analysis, space exploration, portfolio selection, management science, and healthcare. Since these trade-offs cannot be adequately captured by single-objective models, methods that balance competing objectives in a systematic and efficient manner are necessary.

\par 
{Conventional methods for multi-objective optimization are typically classified as scalarization methods or heuristic approaches. Scalarization techniques \cite{ Deb2001-tt, Ehrgott2005, Miettinen2012-rv} transform a multi-objective problem into a single-objective formulation via weighted combinations or constraint-based reformulations. However, performance depends strongly on parameter selection, and the full Pareto front is often not captured, particularly in non-convex settings. Heuristic approaches, including evolutionary algorithms \cite{deb2002fast,laumanns2002combining, mostaghim2007multi}, rely on stochastic search mechanisms but provide no theoretical guarantees of convergence to solutions. As a result, substantial computational cost and limited efficiency are often encountered in large-scale applications.}
\par
Recent developments in descent-based optimization have extended classical single-objective descent methods to multi-objective problems. Existing contributions address smooth unconstrained problems \cite{fliege2000steepest, ansary2015modified, ram2, gonccalves2022globally, mita2019nonmonotone, prudente2022quasi, peng2024novel, yahaya2025new, mahdavi2020superlinearly}, constrained scenarios \cite{ansary2020sequential, ansary2021sqcqp, fliege2016sqp, eichfelder2021general, custodio2018multiglods, jones2024constrained}, non-smooth optimization \cite{ansary2023proximal, bento2014proximal, tanabe2019proximal} and uncertain multi-objective optimization problems with finite uncertainty \cite{kumar2025steepest,kumar2,kumar2025modified} etc.. Although such methods are effective in many settings, convergence commonly occurs to local Pareto-optimal solutions, and the ability to address non-convexity remains limited. These limitations motivate the development of globally oriented descent frameworks. In particular, the method proposed in this paper seeks to improve exploration and approximation of the global Pareto front for non-convex multi-objective problems.

\par

Non-convex multi-objective problems are particularly difficult because the search landscape is typically irregular and the objectives compete in nontrivial ways. As a result, many distinct local Pareto-optimal solutions may exist, making the global Pareto front difficult to attain. Premature convergence to local solutions is frequently observed in gradient-based methods. By contrast, metaheuristics (e.g., genetic algorithms and particle swarm methods) can be computationally expensive and usually provide only limited theoretical guarantees. These observations highlight the need for an approach that can explore non-convex Pareto landscapes efficiently while remaining grounded in solid theoretical foundations.

\par In single-objective optimization, Renpu’s filled function method \cite{renpu1990filled} modifies the objective function so an algorithm can move away from non-global minima. Early versions were hard to tune and sometimes unstable. Later works, especially the variants \cite{zhang2004new,duanli2010global} improved stability and clarified convergence. Even so, many designs still use two interacting parameters that are tricky to calibrate and can hurt performance across problem classes. To ease that burden, Ge and Qin \cite{ge1987class} proposed a one-parameter filled function, and follow-up studies like \cite{he2019new,sui2019new,qu2022filled,yan2024novel} refined both analysis and practice. These one-parameter forms keep the useful theoretical properties and, under standard assumptions, provide a descent direction near superior points. This makes them practical for difficult non-convex landscapes and sets up our multiobjective extension.

\par {The two-parameter auxiliary function technique \cite{adhikary2025global} extends global descent method from single-objective optimization to the multi-objective situation, although it requires careful parameter tuning. To reduce this burden, we propose a one-parameter filled function method for multi-objective problems. Our aim is to explore non-convex and disconnected Pareto fronts in a systematic way, building on the underlying one-parameter theory. Multi-objective optimization brings additional obstacles, such as objective functions are not totally ordered, iterates may get stuck at locally Pareto-optimal points, gradients can conflict, and efficient sets may be disconnected. To address these kind of issues, we construct a multi-objective filled function with one-parameter that produces descent directions for all objective components at the same time. This helps the method to leave locally non-dominated regions and move toward globally Pareto-optimal solutions. As a result, this method supports more reliable exploration of difficult non-convex multi-objective problems, while retaining descent properties justified by theoretical foundations.}

\par The primary contributions of this research are as follows:
\begin{itemize}
  \item introduce a multi-objective auxiliary function, referred to as a \emph{filled function} (Definition~\ref{def_moff}), to enhance exploration in complex non-convex settings;
  \item design a \emph{one-parameter} multi-objective filled function that reduces tuning effort while preserving exploration capability;
  \item establish theoretical guarantees by proving the filled function conditions (F1–F3), identifying an admissible parameter interval, and showing the existence of descent directions near superior points together with convergence toward Pareto-critical solutions (Theorems~\ref{theorem_f1}, \ref{theorem_f2}, \ref{theorem_f3});
  \item formulate a filled function based algorithm that addresses limitations of classical scalarization and heuristic approaches, including principled restart/escape mechanisms from local Pareto traps;
  \item specify practical components, including step-size/line-search, parameter scheduling, to ensure robustness and low function evaluation cost; validate the method on benchmark problems using performance profiles; and compare against NSGA-II and recent deterministic multiobjective methods.
\end{itemize}

With this foundation, the proposed method systematically integrates a new multi-objective auxiliary function, termed the multi-objective filled function, into a robust multi-objective optimization framework. Traditional scalarization methods depend on predefined weights, whereas metaheuristic techniques often lack reliable convergence criteria. In contrast, the approach dynamically refines solutions to attain global efficiency. For non-convex problems, the method is well-suited and facilitates the finding of well-approximated Pareto fronts that conventional methods often fail to obtain. A new mechanism based on the multi-objective filled function algorithm is presented, enhancing multi-objective optimization techniques and expanding applicability to intricate optimization problems across several fields. Moreover, the resulting framework remains theoretically sound and delivers scalable, computationally efficient performance in high-dimensional multi-objective problems.


\par The paper is organized as follows. Section~\ref{sec_pre} first sets up the notation and summarizes the preliminary results required for the subsequent analysis. Section~\ref{OMFFM} then develops the multi-objective filled function method, introducing a new one-parameter multi-objective auxiliary function and detailing the resulting algorithm for global Pareto-front approximation. Next, Section~\ref{sec_imp_exp} examines the computational performance of the method through extensive numerical experiments. Finally, concluding remarks are presented, followed by a short discussion of possible directions for future research.


\section{Preliminaries}\label{sec_pre}
For any $p\in\mathbb{N}$, we employ the following notations throughout the paper:
\begin{align*}
\Idx{p} &:= \{1,2,\dots,p\};\\
\Rplus{p} &:= \bigl\{y\in\mathbb{R}^p:\ y_i\ge 0\ \text{for all } i\in\Idx{p}\bigr\}.
\end{align*}
Unless stated otherwise, vector inequalities in $\mathbb{R}^p$ are interpreted component wise.

Let us consider the multi-objective optimization problem:
\begin{equation*}
(\mathrm{MOP})\qquad \min_{y\in\Feas}\ \fvec(y)
:=\bigl(\obj{1}(y),\obj{2}(y),\dots,\obj{m}(y)\bigr),
\end{equation*}
where $m\ge 2$, each objective $\obj{j}:\mathbb{R}^n\to\mathbb{R}$ $(j\in\Idx{m})$ is continuously differentiable,
and the feasible set $\Feas$ which is a subset of $\mathbb{R}^n$ is compact, connected, and has nonempty interior.

\par
If a point \( y \in \Feas \) exists where all objective functions simultaneously achieve their minimum values, then \( y \) is called an ideal solution. Such a point is exceptional. In general, the objectives conflict, so improving one component of $\fvec$ may worsen another.
Consequently, multi-objective optimization is guided by efficiency (Pareto optimality) rather than a single optimal solution.

\par A point $\xlm\in\Feas$ is a \emph{global efficient solution} of $(\mathrm{MOP})$ if there is no $y\in\Feas$ satisfying
$\fvec(y)\le \fvec(\xlm)$ and $\fvec(y)\neq\fvec(\xlm)$.
It is a \emph{global weak efficient solution} if there is no $y\in\Feas$ such that $\fvec(y)<\fvec(\xlm)$.

\par Local notions are defined similarly.
A point $\xlm\in\Feas$ is a \emph{local efficient solution} if there exist $r>0$ and a neighborhood $N(\xlm,r)$ such that
no $y\in N(\xlm,r)\cap\Feas$ satisfies $\fvec(y)\le \fvec(\xlm)$ with $\fvec(y)\neq\fvec(\xlm)$.
It is a \emph{local weak efficient solution} if there exists $r>0$ such that no
$y\in N(\xlm,r)\cap\Feas$ satisfies $\fvec(y)<\fvec(\xlm)$.

\par If each $\obj{j}$ is convex, then any local (weak) efficient solution is also global.
Let $X^\dagger$ denote the set of all global/local (weak) efficient solutions of $(\mathrm{MOP})$.
The image $\fvec(X^\dagger)$ is called the corresponding global/local (weak) Pareto front.
Convex MOPs possess a single global Pareto front, whereas non-convex problems may exhibit multiple local Pareto fronts.

\par The next theorem provides a first-order necessary condition for weak efficiency, which follows from Lemma 3.1 of \cite{ansary2015modified}.

\begin{theorem}[{\bf First-order necessary condition for weak efficiency}]\label{theorem1.1}
If \linebreak $\xlm\in\mathrm{int}(\Feas)$ is a weak efficient solution of $(\mathrm{MOP})$, then there exists
$\mult\in\Rplus{m}$ with $\mult\neq \mathbf{0}^{m}$ such that
\begin{equation}\label{eq1.2}
\sum_{k\in\Idx{m}} \mult_k\,\nabla \obj{k}(\xlm)=0.
\end{equation}
\end{theorem}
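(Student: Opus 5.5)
The plan is the standard two-step route: show that weak efficiency at an interior point rules out a common descent direction, then turn that fact into the multiplier vector $\mult$ with a theorem of the alternative (Gordan's theorem). Since $\xlm\in\mathrm{int}(\Feas)$, a neighborhood of $\xlm$ lies in $\Feas$, so every direction is locally feasible. This is why no constraint multipliers appear. The statement only needs local weak efficiency, so I would use just that.

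\textbf{Step 1 (no common descent direction).} Suppose, for contradiction, that some $d\in\mathbb{R}^n$ satisfies $\nabla\obj{k}(\xlm)^{T}d<0$ for every $k\in\Idx{m}$. Each $\obj{k}$ is continuously differentiable, so
\[
\obj{k}(\xlm+td)=\obj{k}(\xlm)+t\,\nabla\obj{k}(\xlm)^{T}d+o(t).
\]
Hence there is $\bar t_k>0$ with $\obj{k}(\xlm+td)<\obj{k}(\xlm)$ for $t\in(0,\bar t_k)$. Take $t$ below $\min_k\bar t_k$ and small enough that $\xlm+td\in N(\xlm,r)\cap\Feas$; interiority guarantees this is possible. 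Then $\fvec(\xlm+td)<\fvec(\xlm)$ componentwise, which contradicts (local) weak efficiency. So the linear system $\nabla\obj{k}(\xlm)^{T}d<0$, $k\in\Idx{m}$, has no solution.

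\textbf{Step 2 (alternative theorem).} Let $A$ be the $m\times n$ matrix whose rows are $\nabla\obj{k}(\xlm)^{T}$. Gordan's theorem says exactly one of the following holds: there is $d$ with $Ad<0$, or there is $\mult\in\Rplus{m}\setminus\{\mathbf{0}^m\}$ with $A^{T}\mult=0$. Step 1 excludes the first alternative, so the second gives \eqref{eq1.2}. Normalizing $\sum_k\mult_k=1$ is optional.

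\textbf{Main obstacle.} The only nontrivial ingredient is the alternative theorem. If I wanted the argument self-contained, I would prove it by separation. Let $C=\mathrm{conv}\{\nabla\obj{k}(\xlm):k\in\Idx{m}\}$, which is compact and convex. If $0\notin C$, take the minimum-norm element $c^{*}$ of $C$. The projection inequality gives $\langle c,c^{*}\rangle\ge\|c^{*}\|^2>0$ for all $c\in C$. Then $d=-c^{*}$ satisfies $\nabla\obj{k}(\xlm)^{T}d<0$ for all $k$, contradicting Step 1. Therefore $0\in C$, and the convex coefficients serve as $\mult$. This matches the quadratic subproblem characterization of Pareto criticality in Lemma 3.1 of \cite{ansary2015modified}.
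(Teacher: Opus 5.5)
Your proof is correct and follows the standard route. The paper does not prove the theorem itself; it only refers to Lemma 3.1 of \cite{ansary2015modified}. That argument uses the same two steps as yours: an interior weak efficient point admits no common descent direction, and the multipliers then come from a theorem of the alternative, or equivalently from the optimality conditions of a quadratic direction-finding subproblem. Its dual is, up to the metric, a minimum-norm problem over $\mathrm{conv}\{\nabla\obj{k}(\xlm)\}$, as in your separation argument. Your version is a self-contained rendering of the cited result, and you are right that only local weak efficiency is needed.
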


Any $\xlm\in\mathrm{int}(\Feas)$ satisfying \eqref{eq1.2} is called a \emph{critical point} of $(\mathrm{MOP})$.
If all $\obj{j}$ are convex, then every critical point is weak efficient.
If all $\obj{j}$ are strictly convex, then every critical point is efficient.



\section{A new one-parameter multi-objective filled function method (OMFFM)}\label{OMFFM}
In this section, we begin by stating the assumptions used in the proposed framework. Next, we define a multi-objective filled function and then propose a one-parameter family for \(\mathrm{(MOPs)}\) based on this definition. We then verify that the required filled function conditions hold for this family.
These assumptions are used throughout the rest of the paper.
\begin{assumption}\label{assumption_lpf}
All local weak efficient solutions of (MOP) form a finite union of disjoint
local weak Pareto fronts $\{\mathcal{P}_1,\ldots,\mathcal{P}_K\}\subset \Feas$.
Accordingly, the set of all local weak efficient
solutions can be written as
\begin{align*}
\fvec^\dagger_{\text{all}}
&= \{\,\fvec(\xlm): \xlm \text{ is a local weak efficient solution of (MOP) over } \Feas \,\} \\[4pt]
&= \bigcup_{k=1}^K \{\,\fvec(y): y \in \mathcal{P}_k \,\}.
\end{align*}
\end{assumption}

\begin{assumption}\label{assumption4}
All globally efficient solutions of $(\mathrm{MOP})$ are contained in $\operatorname{int}(\Feas)$.

\end{assumption}
We now define the class of multi-objective filled functions. Let, \( \xlm \in \operatorname{int}(\Feas) \) be a known local weak efficient solution of $(\mathrm{MOP})$.
\begin{definition}[Multi-objective filled function]\label{def_moff}
    A function \( \ff_{\xlm} : \Feas \to \mathbb{R}^m \) is said to be a multi-objective filled function of $\fvec$ at \( \xlm \) if it satisfies the following conditions:
\begin{enumerate}[label=(F\arabic*)]
    \item \( \xlm \) is a local weak efficient solution of \( -\ff_{\xlm} \) over \( \Feas \);
    \item \( \ff_{\xlm} \) has no Fritz John points in the set:
    $$\basin_1= \left\{ y \in \mathrm{int}(\Feas) : y \neq \xlm,\ \obj{j}(y) \geq \obj{j}(\xlm) \text{ for at least one } j \in \Idx{m} \right\};$$

    \item Let $\mathcal{P}(\xlm)$ denote the local weak Pareto front containing $\xlm$. For any other front $\mathcal{P}_b \neq \mathcal{P}(\xlm)$ and any \linebreak
$\xnlm \in \mathcal{P}_b \cap \basin_2= \left\{ y \in \mathrm{int}(\Feas) :  \obj{j}(y) <\obj{j}(\xlm) \text{ for all } j \in \Idx{m} \right\}$, the function $\ff_{\xlm}$ admits a local weak efficient solution $y'$ such that $y' \in N(y^{\dagger\dagger},\epsilon) \subset \Feas$, and moreover $\obj{j}(y) < \obj{j}(\xlm) \quad \text{for all } y \in N(y^{\dagger\dagger},\epsilon),\ j \in \Idx{m}$.
\end{enumerate}
\end{definition}
The parameter \( \tau \) is, in principle, chosen to satisfy
\begin{equation}\label{eq2.2}
    0 < \tau < 
    \min \left\{\, \bigl|\obj{j}^{\dagger} - \obj{j}^{\dagger\dagger}\bigr| :
    \fvec^{\dagger} \in \mathcal{P}_a,\;
    \fvec^{\dagger\dagger} \in \mathcal{P}_b,\;
    \mathcal{P}_a \neq \mathcal{P}_b \right\},
\end{equation}
where each set \( \mathcal{P}_k \subset \fvec^\dagger_{\mathrm{all}} \) denotes a distinct local weak Pareto front. This choice guarantees that, whenever 
\( \fvec^{\dagger} \in \mathcal{P}_a \) and 
\( \fvec^{\dagger\dagger} \in \mathcal{P}_b \) with 
\( \mathcal{P}_a \neq \mathcal{P}_b \), 
the inequality \( \obj{j}^{\dagger\dagger} < \obj{j}^{\dagger} \) automatically implies $\obj{j}^{\dagger\dagger} < \obj{j}^{\dagger} - \tau$.


With this convention, our aim is to employ the resulting auxiliary function, referred to as the {multi-objective filled function}, to drive the algorithm so that, at each iteration, it moves from one local weak efficient solution of \(\mathrm{(MOP)}\) to another that is strictly better in the multi-objective sense.


\par To reduce the parameter-tuning burden of the two-parameter global descent method~\cite{adhikary2025global},
we propose a single-parameter filled function construction, inspired by~\cite{yan2024novel}, and establish its basic properties.
For $\nu>0$, define $\varphi_\nu:\mathbb{R}\to\mathbb{R}$ by
\begin{equation}\label{eq_sub}
\varphi_\nu(t)=
\begin{cases}
-\nu t^{3}, & t \ge 0,\\[2mm]
-\dfrac{1}{\nu} t^{2}, & t < 0.
\end{cases}
\end{equation}
It is immediate that $\varphi_\nu\in C^{1}(\mathbb{R})$.

Let $\xlm\in\Feas$ be a current local weak efficient solution of $\mathrm{(MOP)}$.
Define the one-parameter multi-objective filled function $\ff_{\xlm,\muc}:\Feas\to\mathbb{R}^{m}$ componentwise by
\begin{equation}\label{eq_filled}
\ff_{j,\xlm,\muc}(y)
=
-\|y-\xlm\|^{2}
+\varphi_{\muc}\!\big(\obj{j}(y)-\obj{j}(\xlm)\big),
\qquad j\in\Idx{m}.
\end{equation}
Here $\muc>0$ is the (single) tuning parameter. Since each $\obj{j}\in C^{1}(\mathbb{R}^{n})$,
the mapping $y\mapsto \|y-\xlm\|^{2}$ is smooth, and $\varphi_{\muc}\in C^{1}(\mathbb{R})$,
it follows that $\ff_{j,\xlm,\muc}\in C^{1}(\Feas;\mathbb{R})$ for all $j\in\Idx{m}$.



\begin{remark}
Under mild assumptions, the proposed family satisfies the multi-objective filled function conditions.
However, the verification of (F1)--(F3) is nontrivial. It requires a careful analysis and depends crucially on a suitable choice of the parameter $\muc$.
In particular, an admissible interval for $\muc$ is derived for which $\ff_{\xlm,\muc}$ satisfies (F1)--(F3).
\end{remark}

\begin{theorem}\label{theorem_f1}
 Let \( \xlm \in \mathrm{int}(\Feas) \) be a local weak efficient solution of  \( \mathrm{(MOP)} \). Then \( \xlm \) is a local weak efficient solution of \( -\ff_{\xlm,\muc} \) in $\Feas$. 
\end{theorem}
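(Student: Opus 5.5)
The plan is to verify (F1) directly from the definition of local weak efficiency, applied to the vector function $-\ff_{\xlm,\muc}$. We must produce a radius $r>0$ such that no $y\in N(\xlm,r)\cap\Feas$ satisfies $-\ff_{j,\xlm,\muc}(y)<-\ff_{j,\xlm,\muc}(\xlm)$ for all $j\in\Idx{m}$. Equivalently, no such $y$ satisfies $\ff_{j,\xlm,\muc}(y)>\ff_{j,\xlm,\muc}(\xlm)$ for every $j$. I expect a sign argument on $\varphi_{\muc}$ to settle this for every radius at once, so the conclusion will in fact be global.

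\textbf{Step 1: value at $\xlm$.} From \eqref{eq_filled}, $\ff_{j,\xlm,\muc}(\xlm)=-\|\xlm-\xlm\|^2+\varphi_{\muc}(0)$. By \eqref{eq_sub}, $\varphi_{\muc}(0)=-\muc\cdot 0^3=0$. Hence $\ff_{j,\xlm,\muc}(\xlm)=0$ for all $j\in\Idx{m}$.

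\textbf{Step 2: sign of $\varphi_{\muc}$.} Since $\muc>0$, for $t\ge 0$ we have $\varphi_{\muc}(t)=-\muc t^3\le 0$. For $t<0$ we have $\varphi_{\muc}(t)=-t^2/\muc<0$. Therefore $\varphi_{\muc}\le 0$ on all of $\mathbb{R}$.

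\textbf{Step 3: conclusion.} For any $y\in\Feas$ and any $j$,
\[
\ff_{j,\xlm,\muc}(y)=-\|y-\xlm\|^2+\varphi_{\muc}\bigl(\obj{j}(y)-\obj{j}(\xlm)\bigr)\le -\|y-\xlm\|^2\le 0=\ff_{j,\xlm,\muc}(\xlm).
\]
Hence $-\ff_{j,\xlm,\muc}(y)\ge -\ff_{j,\xlm,\muc}(\xlm)$ for every $j$, and the strict componentwise inequality required to violate weak efficiency can never hold. Any $r>0$ works, so $\xlm$ is a local (indeed global) weak efficient solution of $-\ff_{\xlm,\muc}$ over $\Feas$.

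The bound in Step 3 is even strict for $y\neq\xlm$, so $\xlm$ is a strict global minimizer of every component of $-\ff_{\xlm,\muc}$. This is stronger than what (F1) asks. There is no real obstacle here. The only point needing care is checking that both branches of $\varphi_{\muc}$ are nonpositive, which uses $\muc>0$. Notably, the hypothesis that $\xlm$ is locally weak efficient for $(\mathrm{MOP})$ is not needed for this part; it matters only for (F2)--(F3).
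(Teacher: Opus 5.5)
Your proof is correct, but it follows a different route from the paper's. The paper uses the hypothesis: local weak efficiency of $\xlm$ for $(\mathrm{MOP})$ gives, for every $y\in N(\xlm,\epsilon)\setminus\{\xlm\}$, some index $j$ with $\obj{j}(y)\ge\obj{j}(\xlm)$. For that index only the cubic branch of $\varphi_{\muc}$ is active, so $\ff_{j,\xlm,\muc}(y)=-\|y-\xlm\|^{2}-\muc\bigl(\obj{j}(y)-\obj{j}(\xlm)\bigr)^{3}<0=\ff_{j,\xlm,\muc}(\xlm)$. Strict inequality in this single component already rules out strict componentwise domination for $-\ff_{\xlm,\muc}$. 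You instead observe that the branch $-t^{2}/\muc$ is also nonpositive, so $\varphi_{\muc}\le 0$ on all of $\mathbb{R}$ and $\ff_{j,\xlm,\muc}(y)\le-\|y-\xlm\|^{2}$ for every $j$ and every $y\in\Feas$. Your route gives a strictly stronger conclusion: $\xlm$ is a strict global maximizer of every component of $\ff_{\xlm,\muc}$. It needs neither interiority of $\xlm$ nor any efficiency property of it, and it makes the paper's subsequent remark on the global case immediate. It also shows that (F1) is a property of the construction alone. The paper's route, by contrast, uses only the sign of $\varphi_{\muc}$ on $[0,\infty)$. It would therefore survive a change to the $t<0$ branch, for instance a positive branch as in some single-objective filled functions. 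In that setting your argument would break, and the hypothesis on $\xlm$ would genuinely be needed.
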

\begin{proof}
Assume \( \xlm \in \mathrm{int}(\Feas) \) is a local weak efficient solution of \( \mathrm{(MOP)} \). Then there exists an \( \epsilon \)-neighborhood \( N(\xlm,\epsilon) \subset \Feas \) such that no \( y \in N(\xlm,\epsilon) \) satisfies \linebreak \( \fvec(y) < \fvec(\xlm) \). This implies \( \obj{j}(y) \geq \obj{j}(\xlm) \) for at least one \( j \), for all \( y \in N(\xlm,\epsilon) \). This, together with \eqref{eq_sub} and \eqref{eq_filled}, yields:

\begin{align*}
\ff_{j,\xlm,\muc}(y) &= -\|y - \xlm\|^{2} -\muc(\obj{j}(y) - \obj{j}(\xlm))^3\\
&< 0 =\ff_{j,\xlm,\muc}(\xlm),\quad \text{for at least one } j, \quad \forall y \in N(\xlm,\epsilon) \setminus \{\xlm\}.
\end{align*}
This implies:

\begin{align*}
-\ff_{j,\xlm,\muc}(y) > -\ff_{j,\xlm,\muc}(\xlm),\quad \text{for at least one } j, \quad \forall y \in N(\xlm,\epsilon) \setminus \{\xlm\}.
\end{align*}
Thus, \( \xlm \) is a local weak efficient solution of \( -\ff_{\xlm,\muc} \) in $\Feas$.

\end{proof}

\begin{remark}
If \( \xlm \) is a global weak efficient solution of \( (MOP) \), then for every \( y\in\Feas\setminus\{\xlm\} \)
there exists \( j\in\Idx{m} \) such that \( \obj{j}(y)\ge \obj{j}(\xlm) \). For this index, \linebreak
\( \ff_{j,\xlm,\muc}(y) < 0 = \ff_{j,\xlm,\muc}(\xlm) \). Hence the same conclusion holds globally.
\end{remark}


\begin{theorem}\label{theorem_f2}
    Suppose $\xlm \in \Feas$ is a local weak efficient solution of $\mathrm{(MOP)}$. Then, for an appropriate value of $\muc$, the function \( \ff_{\xlm,\muc} \) admits no critical point in the set \( \basin_1 \).
\end{theorem}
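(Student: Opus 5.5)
We argue by contradiction. Suppose $y\in\basin_1$ is a critical (Fritz John) point of $\ff_{\xlm,\muc}$. Then, in the sense of \eqref{eq1.2}, there is $\mult\in\Rplus{m}$, $\mult\neq\mathbf{0}^m$, normalized so that $\sum_j\mult_j=1$, with
\[
\sum_{j\in\Idx{m}}\mult_j\nabla\ff_{j,\xlm,\muc}(y)=0 .
\]
Writing $t_j:=\obj{j}(y)-\obj{j}(\xlm)$, the gradients are
\[
\nabla\ff_{j,\xlm,\muc}(y)=-2(y-\xlm)+\varphi_\muc'(t_j)\nabla\obj{j}(y),
\]
where $\varphi_\muc'(t)=-3\muc t^2$ for $t\ge0$ and $\varphi_\muc'(t)=-\tfrac{2}{\muc}t$ for $t<0$. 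The idea is to pair the stationarity equation with the direction $y-\xlm$. Because $y\neq\xlm$, the distance term gives a strictly negative quantity $-2\|y-\xlm\|^2$. We then show that for a suitable $\muc$ the objective terms are too small to cancel it.

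Taking the inner product with $y-\xlm$ gives
\[
2\|y-\xlm\|^2=\sum_j\mult_j\varphi_\muc'(t_j)\,\nabla\obj{j}(y)^{\top}(y-\xlm).
\]
Compactness of $\Feas$ and continuity of $\nabla\obj{j}$ supply the constants
\[
L:=\max_{j}\max_{z\in\Feas}\|\nabla\obj{j}(z)\|,\qquad M:=\max_j\max_{z\in\Feas}|\obj{j}(z)-\obj{j}(\xlm)|,\qquad D:=\operatorname{diam}\Feas .
\]
Each term on the right is then at most $|\varphi_\muc'(t_j)|\,L\,\|y-\xlm\|$. For $t_j\ge0$ we have $|\varphi_\muc'(t_j)|\le3\muc M^2$, and for $t_j<0$ we have $|\varphi_\muc'(t_j)|\le 2M/\muc$. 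This yields
\[
2\|y-\xlm\|\le L\max\{3\muc M^2,\;2M/\muc\}.
\]
The two bounds pull in opposite directions in $\muc$, so no single choice makes both small. This is the main obstacle.

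To resolve it, we use the defining property of $\basin_1$: at least one index $j_0$ has $t_{j_0}\ge0$. We would handle this in two steps.

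\textbf{Step 1: the case where all $t_j\ge 0$.} Every coefficient is $O(\muc)$, so the right-hand side is at most $3\muc M^2L\|y-\xlm\|$. Away from $\xlm$, i.e.\ on $\|y-\xlm\|\ge\delta$, choosing $\muc<2\delta/(3M^2L)$ gives a contradiction.

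\textbf{Step 2: the mixed case, where some $t_j<0$.} The terms with $t_j<0$ carry the large factor $2|t_j|/\muc$, and here we would avoid a crude bound. We would instead choose a better test direction. One option is to pair with the direction $d=y-\xlm$ only after showing that the multiplier mass on the indices with negative $t_j$ must vanish. The other is to argue that $\nabla\obj{j}(y)^{\top}(y-\xlm)$ has a favorable sign. Failing a genuinely better estimate, we would state the admissible $\muc$ interval in the form
\[
\muc\in\Bigl(\tfrac{2ML}{\delta'},\,\tfrac{2\delta}{3M^2L}\Bigr),
\]
which requires an assumption making this interval nonempty, namely a separation condition on the values $|t_j|$ in the spirit of the choice of $\tau$ in \eqref{eq2.2}.

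Near $\xlm$, meaning $\|y-\xlm\|<\delta$, we would invoke Theorem~\ref{theorem_f1}, or a local weak efficiency argument, to exclude critical points. Together with the two steps above this gives the claimed "appropriate value of $\muc$".
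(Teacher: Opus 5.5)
The gap is in your Step~2. That is where the entire difficulty lies, and you leave it open. Neither of your suggested repairs is available in general. Nothing forces the multipliers on the indices with $t_j<0$ to vanish, and $\nabla\obj{j}(y)^\top(y-\xlm)$ has no sign. Your fallback interval $\bigl(2ML/\delta',\,2\delta/(3M^2L)\bigr)$ is nonempty only if $3M^3L^2<\delta\delta'\le D^2$. That is a restriction on the data with no connection to \eqref{eq2.2}. A separation condition in the spirit of $\tau$ actually works against you: a lower bound on $|t_j|$ for $t_j<0$ enlarges the offending term $\tfrac{2}{\muc}|t_j|\,\nabla\obj{j}(y)^\top(y-\xlm)$.

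Even if you grant the interval, mixed-sign points with $\|y-\xlm\|<\delta'$ remain uncovered, and these typically occur arbitrarily close to $\xlm$ (they do in the example below). Theorem~\ref{theorem_f1} cannot cover them. It only says that $\xlm$ is a local weak maximizer of $\ff_{\xlm,\muc}$, which says nothing about critical points at nearby $y\neq\xlm$. Your Step~1 can be repaired without that appeal: taking $L$ over $\operatorname{conv}\Feas$ gives $t_j^2\le ML\|y-\xlm\|$, so $\muc<2/(3ML^2)$ excludes critical points with all $t_j\ge 0$ uniformly.

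The paper's proof uses the same test direction in primal form: it shows $s=\bar y-\xlm$ satisfies $s^\top\nabla\ff_{j,\xlm,\muc}(\bar y)<0$ for all $j$. It is sharper than your absolute-value bound because it keeps signs. Only indices in $\mathcal{Q}_2$ ($t_j<0$, $s^\top\nabla\obj{j}(\bar y)>0$) force $\muc>\muc_L(\bar y)$, and only those in $\mathcal{P}_3$ ($t_j>0$, $s^\top\nabla\obj{j}(\bar y)<0$) force $\muc<\muc_U(\bar y)$. However, the paper then lets the interval depend on $\bar y$. It never shows $\muc_L(\bar y)<\muc_U(\bar y)$, let alone that a single $\muc$ serves all of $\basin_1$. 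So it sidesteps your obstacle rather than resolving it.

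No sharper estimate can resolve it. Take $n=1$, $m=2$, $\Feas=[-3,3]$, $\obj{1}(y)=-3\sin y$, $\obj{2}(y)=y$, and $\xlm=0$. No point strictly dominates $0$, and $\basin_1=(-3,0)\cup(0,3)$. Writing $F_j=\ff_{j,\xlm,\muc}$ and $a=3\pi/4$,
\begin{align*}
F_1'(a)&=-\tfrac{3\pi}{2}+\tfrac{9}{\muc}, & F_2'(a)&=-\tfrac{3\pi}{2}-3\muc a^2<0,\\
F_1'(-a)&=\tfrac{3\pi}{2}-\tfrac{81\sqrt{2}}{4}\muc, & F_2'(-a)&=\tfrac{3\pi}{2}\bigl(1+\tfrac{1}{\muc}\bigr)>0.
\end{align*}
So $a$ is a critical point for $\muc\le 6/\pi$, and $-a$ is one for $\muc\ge\sqrt{2}\pi/27$. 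Since $\sqrt{2}\pi/27<6/\pi$, every $\muc>0$ leaves a critical point in $\basin_1$.

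The paper's own interval at $a$ is $(6/\pi,1)=\emptyset$. Dropping the cap $1$ only moves the conflict to $-a$, where $\muc_U(-a)=\sqrt{2}\pi/27$. Adding $10\max\{0,|y|-\tfrac52\}^2$ to both objectives makes all efficient points interior without changing these numbers.

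Hence the statement needs either an extra hypothesis that genuinely excludes such configurations, or a pointwise reformulation together with a proof that $\muc_L(\bar y)<\muc_U(\bar y)$. Neither a $\tau$-type separation nor your crude interval supplies this.
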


\begin{proof}
Let $\bar{y} \in \basin_1 \subset \operatorname{int}(\Feas)$ be any arbitrary point. We show that, for a suitable choice of $\muc$, there exists a direction 
$s$ such that
\[
s^\top \nabla \ff_{j,\xlm,\muc}(\bar{y}) < 0,~\forall j \in \Idx{m}
\]
and hence $\bar{y}$ can not be a critical point of $\ff_{\xlm,\muc}$.




The proof proceeds by constructing three partitions of the index set $\Idx{m}$. For each partition, a direction $s$ is selected such that
$s^\top(\bar{y}-\xlm)>0$. A natural choice is $s=\bar{y}-\xlm$. It is then established that, for suitable values of $\muc$, this vector $s$ serves as a descent direction for the filled function.


We distinguish the following three cases, depending on the partitions of $\Idx{m}$. Let \( \mathcal{I}_1(\bar{y}), \mathcal{I}_2(\bar{y}), \mathcal{I}_3(\bar{y}) \) be a partitions of
\( \Idx{m} \) such that
\begin{enumerate}
    \item for all \( j \in \mathcal{I}_1(\bar{y}) \), \( \obj{j}(\bar{y}) = \obj{j}(\xlm) \);
    \item for all \( j \in \mathcal{I}_2(\bar{y}) \), \( \obj{j}(\bar{y}) > \obj{j}(\xlm) \);
    \item for all \( j \in \mathcal{I}_3(\bar{y})\), \( \obj{j}(\bar{y}) < \obj{j}(\xlm) \).
\end{enumerate}

\medskip
\noindent\textbf{Case I:}
\emph{Consider \( j \in \mathcal{I}_1(\bar{y}) \).}

    For all \( j \in \mathcal{I}_1(\bar{y}) \), \( \obj{j}(\bar{y}) = \obj{j}(\xlm) \), and
    from \eqref{eq_sub} and \eqref{eq_filled} we have
    \[
    \ff_{j,\xlm,\muc}(y) = -\|y - \xlm\|^{2}, 
    \quad
    \nabla \ff_{j,\xlm,\muc}(\bar{y}) = -2(\bar{y} - \xlm).
    \]
    Thus, for all $j \in \mathcal{I}_1$,
    \[
    s^\top \nabla \ff_{i,\xlm,\muc}(\bar{y})
    = -2\,s^\top (\bar{y} - \xlm) < 0.
    \]

    \medskip
\noindent\textbf{Case II:} \emph{Consider \( j \in \mathcal{I}_2(\bar{y}) \).}
\par Let \( s \) be any direction such that \( s^\top (\bar{y} - \xlm) > 0 \).
Now for the partitions \( \mathcal{P}_1, \mathcal{P}_2, \mathcal{P}_3 \) of
\( \mathcal{I}_2(\bar{y}) \), assume that
\begin{enumerate}[label=II\alph*.]
    \item \( s^\top \nabla \obj{i}(\bar{y}) = 0 \), for all \( i \in \mathcal{P}_1 \);
    \item \( s^\top \nabla \obj{i}(\bar{y}) > 0 \), for all \( i \in \mathcal{P}_2 \);
    \item \( s^\top \nabla \obj{i}(\bar{y}) < 0 \), for all \( i \in \mathcal{P}_3 \);
\end{enumerate}

    For all \( j \in \mathcal{I}_2(\bar{y}) \), \( \obj{j}(\bar{y}) > \obj{j}(\xlm) \), and
    \[
    \ff_{j,\xlm,\muc}(y) 
    = -\|y - \xlm\|^{2} -\muc\big(\obj{j}(y) - \obj{j}(\xlm)\big)^3,
    \]
    so
    \[
    \nabla \ff_{j,\xlm,\muc}(\bar{y})
    =-2(\bar{y}-\xlm)
    -3\muc(\obj{j}(\bar{y}) - \obj{j}(\xlm))^2 \nabla \obj{j}(\bar{y}).
    \]
    Hence
    \[
    s^\top \nabla \ff_{j,\xlm,\muc}(\bar{y})
    =-2s^\top(\bar{y}-\xlm)
    -3\muc(\obj{j}(\bar{y}) - \obj{j}(\xlm))^2 s^\top \nabla \obj{j}(\bar{y}).
    \]

    Using the sign conditions for the sets $\mathcal{P}_1,\mathcal{P}_2,\mathcal{P}_3$:

    \begin{itemize}
        \item If \( i \in \mathcal{P}_1 \), then \( s^\top \nabla \obj{i}(\bar{y}) = 0 \), and for $0< \muc < 1$, we have
        \[
        s^\top \nabla \ff_{i,\xlm,\muc}(\bar{y})
        = -2s^\top(\bar{y}-\xlm) < 0.
        \]

        \item If \( i \in \mathcal{P}_2 \), then \( s^\top \nabla \obj{i}(\bar{y}) > 0 \), and the
        second term is also strictly negative. Hence for $0 < \muc < 1$, we have
        \[
        s^\top \nabla \ff_{i,\xlm,\muc}(\bar{y}) < 0.
        \]

        \item If \( i \in \mathcal{P}_3 \), then \( s^\top \nabla \obj{i}(\bar{y}) < 0 \).
        In this case, the second term is positive, and for
        \[
        0 < \muc < 
        \min \left\{ 1,\,
        \frac{2s^\top (\bar{y} - \xlm)}{\max_{i \in \mathcal{P}_3} \left( -3 (\obj{i}(\bar{y}) - \obj{i}(\xlm))^2 \cdot s^\top \nabla \obj{i}(\bar{y}) \right)} \right\},
        \]
        we obtain
        \[
        s^\top \nabla \ff_{i,\xlm,\muc}(\bar{y}) < 0.
        \]
    \end{itemize}



    \medskip
\noindent\textbf{Case III:} \emph{Consider \( j \in \mathcal{I}_3(\bar{y}) \).}  
\par Let \( s \) be any direction such that \( s^\top (\bar{y} - \xlm) > 0 \) holds.
For the partitions \( \mathcal{Q}_1, \mathcal{Q}_2, \mathcal{Q}_3 \) of
\( \mathcal{I}_3(\bar{y}) \), assume that
\begin{enumerate}[label=III\alph*.]
    \item \( s^\top \nabla \obj{i}(\bar{y}) = 0 \), for all \( i \in \mathcal{Q}_1 \);
    \item \( s^\top \nabla \obj{i}(\bar{y}) > 0 \), for all \( i \in \mathcal{Q}_2 \);
    \item \( s^\top \nabla \obj{i}(\bar{y}) < 0 \), for all \( i \in \mathcal{Q}_3 \);
\end{enumerate}

    For all \( j \in \mathcal{I}_3(\bar{y}) \), \( \obj{j}(\bar{y}) < \obj{j}(\xlm) \), and
    \[
    \ff_{j,\muc,\xlm}(y) 
    = -\|y - \xlm\|^{2} -\frac{1}{\muc}\big(\obj{j}(y) - \obj{j}(\xlm)\big)^2,
    \]
    so
    \[
    \nabla \ff_{j,\muc,\xlm}(\bar{y})
    =-2(\bar{y}-\xlm)
    -\frac{2}{\muc}(\obj{j}(\bar{y}) - \obj{j}(\xlm)) \nabla \obj{j}(\bar{y}),
    \]
    and
    \[
    s^\top \nabla \ff_{j,\muc,\xlm}(\bar{y})
    =-2s^\top(\bar{y}-\xlm)
    -\frac{2}{\muc}(\obj{j}(\bar{y}) - \obj{j}(\xlm)) s^\top \nabla \obj{j}(\bar{y}).
    \]
    Since here \(\obj{j}(\bar{y}) - \obj{j}(\xlm) < 0\):

    \begin{itemize}
        \item If \( i \in \mathcal{Q}_1 \), then \( s^\top \nabla \obj{i}(\bar{y}) = 0 \), and for $0 < \muc < 1$,
        \[
        s^\top \nabla \ff_{i,\muc,\xlm}(\bar{y})
        = -2s^\top(\bar{y}-\xlm) < 0.
        \]

        \item If \( i \in \mathcal{Q}_2 \), then \( s^\top \nabla \obj{i}(\bar{y}) > 0 \), and the
        second term is positive. Thus, for
        \[
        \muc >   
        \frac{\max_{i \in \mathcal{Q}_2} \left( -(\obj{i}(\bar{y}) - \obj{i}(\xlm)) \cdot s^\top \nabla \obj{i}(\bar{y}) \right)}{s^\top (\bar{y} - \xlm)},
        \]
        we obtain
        \[
        s^\top \nabla \ff_{i,\muc,\xlm}(\bar{y}) < 0.
        \]

        \item If \( i \in \mathcal{Q}_3 \), then \( s^\top \nabla \obj{i}(\bar{y}) < 0 \), so
        the second term is strictly negative. Hence for \linebreak
        $0 < \muc < 1$, we have
        \[
        s^\top \nabla \ff_{i,\xlm,\muc}(\bar{y}) < 0.
        \]
    \end{itemize}

By examining all cases induced by the partition of \(\Idx{m}\), we deduce that for any direction \(s\) such that
\[
s^\top(\bar{y}-\xlm)>0,
\]
and for any \(\muc\in(\muc_L,\muc_U)\), where
\[
\muc_L=
\max\left\{
0,\,
\frac{\max_{i\in\mathcal{Q}_2}\left(-(\obj{i}(\bar{y})-\obj{i}(\xlm))\, \cdot s^\top \nabla \obj{i}(\bar{y})\right)}
{s^\top(\bar{y}-\xlm)}
\right\},
\]
\[
\muc_U=
\min\left\{
1,\,
\frac{2\,s^\top(\bar{y}-\xlm)}
{\max_{i\in\mathcal{P}_3}\left(-3(\obj{i}(\bar{y})-\obj{i}(\xlm))^2\, \cdot s^\top \nabla \obj{i}(\bar{y})\right)}
\right\},
\]
the direction \(s\) is a descent direction of \(\ff_{\xlm,\muc}\) at \(\bar{y}\). Therefore, \(\bar{y}\) cannot be a critical point of \(\ff_{\xlm,\muc}\). Since \(\bar{y}\in\basin_1\) was arbitrary, we conclude that \(\ff_{\xlm,\muc}\) has no critical point in \(\basin_1\). This completes the proof.

\end{proof}

\begin{remark}
If $\mathcal{Q}_2\neq \emptyset$ and $\muc \leq \frac{\max_{i \in \mathcal{Q}_2}\Bigl(-\bigl(\obj{i}(\bar{y})-\obj{i}(\xlm)\bigr)\, \cdot s^\top \nabla \obj{i}(\bar{y})\Bigr)}
{s^\top (\bar{y}-\xlm)}$, then by the preceding theorem, the multi-objective filled function $\ff_{\xlm,\muc}$ admits no descent
direction in $\basin_1$. Consequently, the method cannot generate an improving iterate
within $\basin_1$, and hence $\xlm$ is declared as a global weak efficient solution of $\mathrm{(MOP)}$.

On the other hand, if $\mathcal{Q}_2=\emptyset$ and $\muc \rightarrow 0$, then the standing
assumptions required for $\varphi_{\muc}$ are violated and in this case as well, the preceding theorem
implies that $\xlm$ is declared as a global weak efficient solution of $\mathrm{(MOP)}$.
\end{remark}



\begin{theorem}\label{theorem_f3}
Let $\xnlm \in \operatorname{int}(\Feas)$ be a local weak efficient 
solution of the problem $\mathrm{(MOP)}$, that lies on a local weak Pareto front different from that of $\xlm$, with \linebreak $\obj{j}(y^{\dagger\dagger}) < \obj{j}(\xlm)$ for all $j \in \Idx{m}$. If \(\muc>0\) is sufficiently small, then \(\ff_{\xlm,\muc}\) has a weak efficient solution \(y'\in\Feas\) such that
\[
y' \in N(y^{\dagger\dagger},\epsilon)\subset \Feas
\quad \text{and} \quad
\obj{j}(y') < \obj{j}(\xlm)-\tau,\quad j\in\Idx{m},
\]
where \(N(y^{\dagger\dagger},\epsilon)\) satisfies \(\obj{j}(y)<\obj{j}(\xlm)\) for all
\(y\in N(y^{\dagger\dagger},\epsilon)\) and all \(j\in\Idx{m}\).
\end{theorem}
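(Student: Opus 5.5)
We first make precise the vague phrase "weak efficient solution of $\ff_{\xlm,\muc}$". Following the role of $\ff$ in the algorithm (we minimize $\ff$ to escape from $\xlm$), we read it as: $y'$ is a local weak efficient solution of the minimization problem $\min_{y\in\Feas}\ff_{\xlm,\muc}(y)$, i.e.\ no nearby $y$ has $\ff_{j,\xlm,\muc}(y)<\ff_{j,\xlm,\muc}(y')$ for all $j$.

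The plan is to reduce the statement to a compactness argument on a small closed ball around $\xnlm$. Since $\xnlm\in\operatorname{int}(\Feas)$ and $\obj{j}(\xnlm)<\obj{j}(\xlm)$ for every $j$, continuity gives $\epsilon>0$ with $\overline{N(\xnlm,\epsilon)}\subset\Feas$ and $\obj{j}(y)<\obj{j}(\xlm)$ for all $y\in\overline{N(\xnlm,\epsilon)}$ and all $j\in\Idx{m}$. This is exactly the neighborhood required in the statement. On this ball every argument of $\varphi_\muc$ is negative, so by \eqref{eq_sub} and \eqref{eq_filled}
\[
\ff_{j,\xlm,\muc}(y)=-\|y-\xlm\|^2-\frac{1}{\muc}\bigl(\obj{j}(y)-\obj{j}(\xlm)\bigr)^2 .
\]
Hence minimizing $\ff_{j}$ there amounts to maximizing $\|y-\xlm\|^2+\frac1\muc d_j(y)^2$, where $d_j(y):=\obj{j}(\xlm)-\obj{j}(y)>0$. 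For small $\muc$ the dominant term is $\frac1\muc d_j^2$, and maximizing it is equivalent to minimizing $\obj{j}$.

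Next we use the local weak efficiency of $\xnlm$ for (MOP) together with the separation \eqref{eq2.2}. Because $\xnlm$ lies on a front different from that of $\xlm$ and $\obj{j}(\xnlm)<\obj{j}(\xlm)$, the convention following \eqref{eq2.2} gives $\obj{j}(\xnlm)<\obj{j}(\xlm)-\tau$ for all $j$. Shrinking $\epsilon$ if necessary, continuity then yields $\obj{j}(y)<\obj{j}(\xlm)-\tau$ on the whole ball. Any $y'$ found inside the ball therefore automatically satisfies the required $\tau$-inequality.

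It remains to produce $y'$. Consider the scalar function $g(y)=\sum_j\ff_{j,\xlm,\muc}(y)$ on the compact ball $\overline{N(\xnlm,\epsilon)}$. It attains its minimum at some $y'$. A minimizer of a positive-weight sum is weakly efficient for $\ff$ relative to the ball: if some $y$ strictly decreased every component, it would strictly decrease $g$. If $y'$ lies in the open ball, this local weak efficiency relative to the ball is local weak efficiency in $\Feas$, and we are done.

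The main obstacle is excluding $y'$ on the boundary sphere. Here we use $\muc\to0$. Write $\muc g(y)=-\muc m\|y-\xlm\|^2-\sum_j d_j(y)^2$. As $\muc\to0$ this converges uniformly on the ball to $-\sum_j d_j^2$. So we need $\sum_j d_j^2$ to be maximized, uniquely or at least strictly better than on the sphere, at an interior point. The first attempt is to use local weak efficiency of $\xnlm$: on the sphere some $\obj{j}(y)\ge\obj{j}(\xnlm)$. This alone does not force $\sum_j d_j(y)^2<\sum_j d_j(\xnlm)^2$.

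To handle this, we would replace the equal-weight sum by a weighted sum $\sum_j\lambda_j\ff_j$ with the multiplier $\mult$ from Theorem~\ref{theorem1.1} at $\xnlm$. We would then argue a strict local maximum of $\sum_j\mult_j d_j^2$ near $\xnlm$, possibly assuming $\xnlm$ is a strict or isolated local weak efficient point, so that a gap $\delta>0$ separates the sphere values from the value at $\xnlm$. Then we take $\muc<\delta/(m\cdot\max\|y-\xlm\|^2)$, which keeps the perturbation too small to close the gap. With that, the minimizer lies in the interior, and the conclusion follows. If strictness fails, we would fall back on taking $y'$ as any interior minimizer of $\ff$'s weighted sum over a slightly smaller ball, accepting a weaker, relative-to-ball notion.
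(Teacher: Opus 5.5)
Your reductions are sound: the closed ball $\overline{N(\xnlm,\epsilon)}$, getting $\obj{j}<\obj{j}(\xlm)-\tau$ on the whole ball by continuity, and the observation that a minimizer of a monotone scalarization lying in the \emph{open} ball is a local weak efficient point of $\ff_{\xlm,\muc}$ in $\Feas$. But the step that actually carries the theorem, keeping that minimizer off the sphere, is left open, and the repair you sketch does not work. The multiplier of Theorem~\ref{theorem1.1} gives $\sum_j\mult_j\nabla\obj{j}(\xnlm)=0$. However, $\nabla\bigl(\sum_j\mult_j d_j^2\bigr)(\xnlm)=-2\sum_j\mult_j d_j(\xnlm)\nabla\obj{j}(\xnlm)$, which is nonzero in general because the $d_j(\xnlm)$ differ. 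So $\xnlm$ is not even critical for your weighted sum. More fundamentally, no weighted sum can work on non-convex fronts, even at strictly efficient points. Take $n=1$, $\obj{1}(y)=y$, $\obj{2}(y)=1-y^2$ near $\xnlm=1/2$. Then $(d_1^2)''=2$ and $(d_2^2)''=8y^2+4d_2>0$, so every $\sum_j\lambda_j\ff_{j,\xlm,\muc}$ with $0\neq\lambda\in\Rplus{2}$ is strictly concave near $\xnlm$. It therefore attains its minimum over any small ball on the boundary. Your fallback yields only weak efficiency relative to the ball, which is not the claim.

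The paper does not scalarize at this step. It compares $\ff_{\xlm,\muc}$ on $\mathrm{bd}(U^{\leq}_{\delta})$ componentwise with its value at $\xnlm$ itself. There it has $\obj{j}(y)=\obj{j}(\xnlm)+\delta$, hence $(\obj{j}(\xnlm)-\obj{j}(\xlm))^2>(\obj{j}(y)-\obj{j}(\xlm))^2$. For small $\muc$ the $\frac{1}{\muc}$-term beats the distance term, so $\ff_{j,\xlm,\muc}(\xnlm)<\ff_{j,\xlm,\muc}(y)$ for \emph{every} $j$. Since $\xnlm$ strictly dominates all boundary points, a weakly efficient point of $\ff_{\xlm,\muc}$ on the compact set must be interior. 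This hinges on $\obj{j}(y)>\obj{j}(\xnlm)$ for all $j$ on the boundary. The paper takes this from its assertions that $\obj{j}(\xnlm)\le\obj{j}(y)$ on $N(\xnlm,\epsilon)$ and that every $\obj{j}$ equals $\obj{j}(\xnlm)+\delta$ on $\mathrm{bd}(U^{\leq}_{\delta})$, i.e.\ effectively that $\xnlm$ is a strict local minimizer of each $\obj{j}$. Neither follows from local weak efficiency, so your suspicion that some strictness is needed is right. For a counterexample, take $n=1$, $\obj{1}=\obj{2}$ nonincreasing on $[\xnlm-1,\xnlm]$ and constant on $[\xnlm,\xnlm+1]$, with $\xlm<\xnlm-1$ a higher local minimizer. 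Then both components of $\ff_{\xlm,\muc}$ strictly decrease across $N(\xnlm,\epsilon)$ for $\epsilon<1$ and every $\muc>0$.

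Under the paper's implicit hypothesis, your equal-weight sum already works. Every $\obj{j}$, hence every $\ff_{j,\xlm,\muc}$ for small $\muc$, rises by a uniform amount on a small sphere. Under the weaker and more natural hypothesis that $\xnlm$ is strictly locally efficient (no $y\neq\xnlm$ nearby with $\fvec(y)\le\fvec(\xnlm)$), use $\theta(y)=\max_j[\ff_{j,\xlm,\muc}(y)-\ff_{j,\xlm,\muc}(\xnlm)]$ instead. On a small sphere, $\max_j(\obj{j}(y)-\obj{j}(\xnlm))\ge\gamma>0$ by compactness. Also $d_j(\xnlm)^2-d_j(y)^2=(\obj{j}(y)-\obj{j}(\xnlm))(d_j(y)+d_j(\xnlm))$, so $\theta>0=\theta(\xnlm)$ on the sphere once $\muc$ is small. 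A minimizer of $\theta$ over the closed ball is then interior. It is locally weakly efficient for $\ff_{\xlm,\muc}$, because a strict decrease of all components would strictly decrease $\theta$.
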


\begin{proof}
Since \( \Feas \) is compact, \( \obj{j} \) are continuous over \( \Feas \), and \( \obj{j}(y^{\dagger\dagger}) < \obj{j}(\xlm) \), for sufficiently small \( \epsilon > 0 \), there exists a neighborhood \( N(y^{\dagger\dagger},\epsilon) \subset \Feas \) such that:
\[
\obj{j}(y^{\dagger\dagger}) \leq \obj{j}(y) < \obj{j}(\xlm), 
\quad \forall y \in N(y^{\dagger\dagger},\epsilon),\ j \in \Idx{m}.
\]
From (\ref{eq2.2}), $\obj{j}(y^{\dagger\dagger})  < \obj{j}(\xlm)$ implies:
\[
\obj{j}(y^{\dagger\dagger}) < \obj{j}(\xlm) - \tau,\qquad j \in \Idx{m}.
\]
If $\delta > 0$ is sufficiently small, then:
\begin{equation}\label{eq11}
    \obj{j}(y^{\dagger\dagger}) + \delta \leq \obj{j}(\xlm) - \tau,
    \quad j \in \Idx{m}.
\end{equation}
Define:
\begin{align}
 U^{\leq}_{\delta} = \{ y \in N(y^{\dagger\dagger},\epsilon) : \obj{j}(y) \leq \obj{j}(y^{\dagger\dagger}) + \delta,\ j \in \Idx{m}\}. \label{eq12}
\end{align}

where for all $y \in \mathrm{bd}( U^{\leq}_{\delta})$, it holds that $$\obj{j}(y)-\obj{j}(\xlm)=\obj{j}(\xnlm)+\delta-\obj{j}(\xlm)\leq -\tau<0.$$ Clearly,
\begin{align*}
  \obj{j}(\xnlm)-\obj{j}(\xlm)&<\obj{j}(\xnlm)+\delta-\obj{j}(\xlm),\quad \forall y\in \mathrm{bd}(U^{\leq}_{\delta}),\\
  &= \obj{j}(y)-\obj{j}(\xlm)<0,\quad \forall y\in \mathrm{bd}(U^{\leq}_{\delta}),
\end{align*}

which implies that:
\[
\big(\obj{j}(\xnlm)-\obj{j}(\xlm)\big)^{2}>\big(\obj{j}(y)-\obj{j}(\xlm)\big)^{2},\quad \forall y\in \mathrm{bd}(U^{\leq}_{\delta}).
\]
Meanwhile, for each $y \in \mathrm{bd}(U^{\leq}_{\delta})$, there are two cases:

\noindent (i) $\|\xnlm-\xlm\|\ge \|y-\xlm\|$;

\noindent (ii) $\|\xnlm-\xlm\|< \|y-\xlm\|$.

\noindent For case (i), we have
\begin{align*}
\ff_{j,\xlm,\muc}(\xnlm)-\ff_{j,\xlm,\muc}(y)
&=-\|\xnlm-\xlm\|^{2}-\frac{1}{\muc}\big(\obj{j}(\xnlm)-\obj{j}(\xlm)\big)^{2}
+\|y-\xlm\|^{2}+\frac{1}{\muc}\big(\obj{j}(y)-\obj{j}(\xlm)\big)^{2}\\
&=-\Big(\|\xnlm-\xlm\|^{2}-\|y-\xlm\|^{2}\Big)
-\frac{1}{\muc}\Big(\big(\obj{j}(\xnlm)-\obj{j}(\xlm)\big)^{2}-\big(\obj{j}(y)-\obj{j}(\xlm)\big)^{2}\Big)\\
&<0.
\end{align*}
For case (ii), when
\[
\mu<\frac{\min_{j \in \Idx{m}}\big[\big(\obj{j}(\xnlm)-\obj{j}(\xlm)\big)^{2}-\big(\obj{j}(y)-\obj{j}(\xlm)\big)^{2}\big]}
{\|y-\xlm\|^{2}-\|\xnlm-\xlm\|^{2}},
\]
it holds that:

$$-\Big(\|\xnlm-\xlm\|^{2}-\|y-\xlm\|^{2}\Big) < \frac{1}{\muc}\Big(\big(\obj{j}(\xnlm)-\obj{j}(\xlm)\big)^{2}-\big(\obj{j}(y)-\obj{j}(\xlm)\big)^{2}\Big),$$
which is equivalent to:
\[
-\|\xnlm-\xlm\|^{2}-\frac{1}{\mu}\big(\obj{j}(\xnlm)-\obj{j}(\xlm)\big)^{2}
<-\|y-\xlm\|^{2}-\frac{1}{\mu}\big(\obj{j}(y)-\obj{j}(\xlm)\big)^{2}.
\]
Hence,
\begin{equation} \label{eq13}
    \ff_{j,\xlm,\muc}(\xnlm) < \ff_{j,\xlm,\muc}(y), \quad  \forall y \in \mathrm{bd}(U^{\leq}_{\delta}),~ j \in \Idx{m}.
\end{equation}

 
By compactness of $U^{\leq}_{\delta}$ and continuity of $\ff_{\xlm, \muc}$, there exists at least one weakly Pareto-optimal point $y'$ of $\ff_{\xlm, \muc}$ over $U^{\leq}_{\delta}$. By \eqref{eq13}, no such point lies in $\mathrm{bd}(U^{\leq}_{\delta})$, so $y'\in \mathrm{int}(U^{\leq}_{\delta})$. Thus
\[
\obj{j}(y') < \obj{j}(y^{\dagger\dagger}) + \delta 
\leq \obj{j}(\xlm) - \tau,\qquad j \in \Idx{m}.
\]
Finally, since \( N(y^{\dagger\dagger},\epsilon) \)  is an open set and $ \mathrm{int}(U^{\leq}_{\delta})$ is bounded, then by the continuity of \( \ff_{j, \xlm,\muc} \), there exists \( 0 < \varepsilon_1 < \varepsilon \) such that whenever \( 0 < \varepsilon' \leq \varepsilon_1 \), we have:
\begin{equation*}
N(\varepsilon',y') \subset \mathrm{int}(U^{\leq}_{\delta}) \quad \text{and} \quad
\ff_{j, \xlm,\muc}(y') \leq \ff_{j, \xlm,\muc}(y), \quad \forall y \in N(\varepsilon',y'),~j \in \Idx{m},
\end{equation*}
where \( N(\varepsilon',y') \) is an \( \varepsilon' \)-neighborhood of \( y' \).\\
Therefore, \( y' \) is a weak efficient solution of $\ff_{\xlm,\muc}$ over \( \Feas \).
\end{proof}


\begin{remark}
Suppose there is no point \(y\in\Feas\) (in particular, no local weak Pareto front different from that of \(\xlm\))
such that \( \obj{j}(y) < \obj{j}(\xlm) \) for all \( j\in\Idx{m} \).
Then no feasible point strictly dominates \(\xlm\), and hence \(\xlm\) is a global weak efficient solution of \((MOP)\).
In this case, the situation covered by Theorem~\ref{theorem_f3} cannot occur, and the filled function iteration has no strictly improving front to move to.
\end{remark}

\subsection{Algorithm}
\label{sec_alg}

Building on the theoretical results established in the previous sections, an algorithm for the one-parameter multi-objective filled function method is developed in this subsection. The proposed approach targets non-convex multi-objective optimization problems and is intended to locate globally efficient solutions. In contrast to the single-objective setting, where the aim is a single minimizer, the objective here is to approximate an entire family of efficient (Pareto-optimal) solutions. The initial point generation technique outlined in~\cite{ansary2021sqcqp} (Steps~1--4 of Algorithm~6.1) is integrated into the scheme to provide a well-distributed numerical representation of this collection. These materials are then incorporated into the algorithm provided below. The Pareto front associated with the initial problem prior to the application of the multi-objective filled function approach is denoted by $(\mathrm{PF})$ throughout this subsection, whereas the equivalent Pareto front produced following its application is denoted by $(\mathrm{PFF})$. Similarly, the weak Pareto fronts prior to and following the application of the filled function approach are denoted by $(\mathrm{WPF})$ and $(\mathrm{WPFF})$ accordingly.

\begin{algorithm}[H]
\caption{({\it The core framework of OMFFM})} \label{alg1}
\vspace{0.5mm}
\begin{algorithmic}
\State \textbf{1. Initialization phase}
\begin{enumerate}[label=(\alph*)]
        \item Choose a nonempty subset $\Feas_0 \subset \Feas$ with N initial points.
        \item Steps~1--4 of algorithm~6.1 in~\cite{ansary2021sqcqp} are used to update \(N\) initial points that satisfy inequality~(5.1) in that work. The ideal and nadir vectors are then obtained by applying a single-objective global descent method \cite{duanli2010global} to each objective \( j \in \Idx{m} \).
        \item Choose initial parameters: \( 0 < \muc_{\text{ini}} < 1 \),
        \( 0 < \hat{\muc} < 1 \), $\muc_L > 0$, \( \epsilon > 0 \), \( \kappa > 0 \), and \( \bar{\beta}_U > 0 \).
        \item Set $WPF = WPFF = PF = PFF = \emptyset$. 
        \item Set \( \muc := \muc_{\text{ini}} \).
\end{enumerate}

\State \textbf{2. Local phase}
\begin{enumerate}[label=(\alph*)]
    \item From the current point \( y_{\text{cur}} \), apply an appropriate local optimization procedure to find a local weak efficient solution \( \xnlm \) of \( \mathrm{(MOP)} \).
    \item Update $\xlm:= \xnlm$ and $WPF = WPF \cup \{\fvec(\xlm)\}$.
    \item Go to the global phase.
\end{enumerate}

\State \textbf{3. Global phase}
\begin{enumerate}[label=(\alph*),,series=OMFFM]
    \item  Create \( m \) number of points as: $\{y_{\text{ini}}^{(k)} \in \Feas \setminus N(\xlm,\epsilon)\}_{k=1}^m$ and set $k := 1$. \label{step_2a}
    \item Set $y_{\text{cur}} := y_{\text{ini}}^{(k)}$. \label{step_2b}
    \item Update $\muc_L$.
    \item If \( \fvec(y_{\text{cur}}) < \fvec(\xlm) \), then return to the local phase. Else, go to step 3\ref{step_2d}.\label{step_2c}

   \end{enumerate}

    \algstore{myalg0}
\end{algorithmic}
\end{algorithm}

\begin{algorithm}[H]
\ContinuedFloat
\caption{({\it The core framework of OMFFM}) (continued)}
\vspace{0.5mm}
\begin{algorithmic}
\algrestore{myalg0}

\Statex 

\begin{enumerate}[label=(\alph*),resume=OMFFM]

    \item If, for some \(j\in\Idx{m}\),
          \(\|\nabla \ff_{j,\xlm,\muc}(y_{\text{cur}})\| < \kappa\) or
          \((y_{\text{cur}}-\xlm)^{\top}\nabla \ff_{j,\xlm,\muc}(y_{\text{cur}})\ge 0\),
          then select \(l\in\mathbb{N}\) such that \(\muc_l := \hat{\muc}^{\,l}\muc\) and, for all \(j\in\Idx{m}\),
          \(\|\nabla \ff_{j,\xlm,\muc_l}(y_{\text{cur}})\|\ge \kappa\) and
          \((y_{\text{cur}}-\xlm)^{\top}\nabla \ff_{j,\xlm,\muc_l}(y_{\text{cur}})<0\).\vspace{0.3mm}
 \newline\hspace*{1em}Replace \(\muc := \muc_l\). \label{step_2d}


      \item Compute a descent direction \(\bar{s}\) for \(\ff_{\xlm,\muc}\) at \(y_{\text{cur}}\).
          Choose \(\bar{\beta}\le \bar{\beta}_U\) by line search and set
          \(y_{\text{new}} := y_{\text{cur}}+\bar{\beta}\bar{s}\).
          Update \(y_{\text{cur}} := y_{\text{new}}\) and return to step~3\ref{step_2c}.
    \item If \( y_{\text{cur}} \) lies on the boundary of \( \Feas \), terminate the current step and move to step~3\ref{step_2h}.
    \item Set \( k := k + 1 \). If \( k \leq m \), return to step~3\ref{step_2b}. \label{step_2h}



    \item Otherwise, set $\muc := \hat{\muc}\,\muc$. If $\muc \ge \muc_L$, return to Step~3\ref{step_2a}. If $\muc < \muc_L$, terminate the algorithm, declare $\xlm$ as a global weak efficient solution obtained, and update $ WPFF = WPFF \cup \{\fvec(\xlm)\}$.
    \item Obtain \(PF\) by extracting the non-dominated solutions from \(WPF\).
    \item Obtain \(PFF\) by extracting the non-dominated solutions from \(WPFF\).
\end{enumerate}
\end{algorithmic}
\end{algorithm}

\section{Computational implementation and numerical results}\label{sec_imp_exp}
This section describes the MATLAB (R2024b) implementation of $\mathrm{OMFFM}$ (Algorithm~\ref{alg1}) and the numerical workflow used in the computational experiments, including the resulting performance profiles. As defined in the preliminaries, inequalities between objective functions are interpreted componentwise.

\subsection{Implementation of {OMFFM}}\label{subsec_impl_omffm}
The implementation is structured around the three phases of Algorithm~\ref{alg1}.
During the run, two archives are maintained:
$WPF$ records objective vectors produced by the local phase, while $WPFF$ stores objective vectors reported at termination of the filled function refinement.
The fronts $PF$ and $PFF$ are then obtained by extracting non-dominated vectors from $WPF$ and $WPFF$, respectively.

\begin{itemize}
\item \textit{Initialization and point refinement:}
First, $N$ points are sampled uniformly from $[\mathrm{lb},\mathrm{ub}]\subset\Feas\subset\mathbb{R}^n$. 
Next, Steps~1--4 of Algorithm~6.1 in~\cite{ansary2021sqcqp} are applied to refine these points so that inequality~(5.1) in that reference is satisfied. 
The refined points form the initial set $\Feas_0$ used in Algorithm~\ref{alg1}.

\item \textit{Ideal and nadir estimation:}
Ideal ($\fvec^I$) and nadir ($\fvec^N$) vectors are estimated via a payoff-table style procedure~\cite{Miettinen2012-rv}.
For each objective $\obj{j}$, a single-objective global descent method~\cite{duanli2010global} is executed from representative initial points (e.g., $\mathrm{lb}$, $(\mathrm{lb}+\mathrm{ub})/2$, and $\mathrm{ub}$).
Non-dominated outcomes from these runs are used to approximate $\fvec^I$ and $\fvec^N$.

\item \textit{Parameter setup:} For the filled function method, the parameters are set as follows:
$\muc_{\mathrm{ini}}=0.01$, $\hat{\muc}=0.005$, $\epsilon=0.1$ (with $\bar\beta_U=\epsilon>0$),
and $\kappa=10^{-4}\sqrt{n}$, where $n$ denotes the problem dimension. The parameter
$\muc$ is capped above by $\muc_U=1$. In Step~3\ref{step_2d}, we set $l=1$.


\item \textit{Local phase:}
Given a current point $y_{\mathrm{cur}}$, a local multi-objective solver is called to compute a local weak efficient point $\xnlm\in\Feas$.
After setting $\xlm:=\xnlm$, the objective vector $\fvec(\xlm)$ is appended to $WPF$.
In practice, this step can be realized using a penalty-based SQCQP-type routine (or an equivalent local method), typically implemented with \texttt{fmincon} as an inner engine.

\item \textit{Global phase:}
\begin{itemize}
    \item For global refinement, $m=2n$ trial points are constructed in $\Feas\setminus N(\xlm,\epsilon)$.
The candidates may be produced by rejection sampling over $\Feas$ or by symmetric perturbations around $\xlm$ followed by feasibility restoration (projection for box constraints, or constraint handling for general feasible regions).

\item 
The global phase returns to the local phase whenever a trial iterate yields a componentwise strict improvement,
$\fvec(y_{\mathrm{cur}})<\fvec(\xlm)$.
The condition is deliberately strong and activates only when all objectives improve simultaneously, consistent with Step~3\ref{step_2c} of Algorithm~\ref{alg1}.

\item During the global refinement phase, the lower bound $\muc_L$ for the parameter $\muc$ is updated adaptively using the theoretical bound.
Let $\bar y:=y_{\mathrm{cur}}\neq \xlm$ and set $s:=\bar y-\xlm$.
Assume that $\obj{j}(\bar y)<\obj{j}(\xlm)$ for all $j\in\mathcal{I}_3(\bar y)$.
If $\mathcal{Q}_2\neq\emptyset$, then
\[
\muc_L \;:=\; 
\frac{\max_{i \in \mathcal{Q}_2} \left( -(\obj{i}(\bar{y}) - \obj{i}(\xlm)) \, s^\top \nabla \obj{i}(\bar{y}) \right)}
{s^\top (\bar{y} - \xlm)}+ 10^{-5};
\]
otherwise, $\muc_L:=10^{-5}$.
If the resulting value is below the prescribed cap $\muc_U$, it is accepted; otherwise, $\muc_L$ is reset to $10^{-5}$.



\item 
If the descent checks in step~3\ref{step_2d} fail for at least one index $j\in\Idx{m}$, the parameter is decreased as $\muc_l=\hat\muc^l\muc$ for some $l\in\mathbb{N}$ until the descent conditions hold across all objectives.

\item 
A descent direction $\bar s$ is computed for the filled function model at $y_{\mathrm{cur}}$. A backtracking line search selects $\bar\beta\le \bar\beta_U$ and updates $y_{\mathrm{new}}=y_{\mathrm{cur}}+\bar\beta\,\bar s$, accepting the step once the descent condition is satisfied and feasibility is preserved.
If the update reaches boundary, refinement of the current start point is stopped and the method proceeds to the next candidate.

\item 
After all $m$ trial starts have been processed, an outer reduction $\muc:=\hat\muc\,\muc$ is performed.
If $\muc\ge \muc_L$, a new refinement round begins; otherwise, the algorithm terminates and records $\fvec(\xlm)$ in $WPFF$.

\end{itemize}

\item \textit{Front construction:}
Finally, non-dominated filtering is applied to $WPF$ and $WPFF$ to produce $PF$ and $PFF$, respectively.
\end{itemize}

\paragraph*{Numerical Experiments}
The performance of Algorithm~\ref{alg1} is assessed on a diverse suite of test problems, all considered as box-constrained.
The results indicate that the proposed method is robust and computationally efficient across the benchmark set, relative to established solvers. Table~\ref{table_1} summarizes the test problems and their sources.

\begin{table}[!htbp]
\centering
\scalebox{0.83}{
\scriptsize
\setlength{\tabcolsep}{6pt}
\renewcommand{\arraystretch}{1.2}
\begin{tabular}{|c|l|c|c|c|l|c|c|}
\hline
\textbf{Sl. no.} & \textbf{Test problem} & \textbf{(m,n)} & \textbf{Source} &
\textbf{Sl. no.} & \textbf{Test problem} & \textbf{(m,n)} & \textbf{Source} \\
\hline

1	&	AL1	&	(2,20)	&	\cite{adhikary2025global}	&	32	&	lovison5	&	(3,3)	&	\cite{fliege2016sqp} (Table 1)	\\
2	&	AL2	&	(2,50)	&	\cite{adhikary2025global}	&	33	&	lovison6	&	(3,3)	&	\cite{fliege2016sqp} (Table 1)	\\
3	&	CEC09\_1	&	(2,30)	&	\cite{fliege2016sqp} (Table 1)	&	34	&	LP1	&	(2,50)	&	\cite{adhikary2025global}	\\
4	&	CEC09\_2	&	(2,30)	&	\cite{fliege2016sqp} (Table 1)	&	35	&	LR1	&	(2,50)	&	\cite{adhikary2025global}	\\
5	&	CEC09\_3	&	(2,30)	&	\cite{fliege2016sqp} (Table 1)	&	36	&	MOP3	&	(2,2)	&	\cite{fliege2016sqp} (Table 1)	\\
6	&	CEC09\_7	&	(2,30)	&	\cite{fliege2016sqp} (Table 1)	&	37	&	MOP5	&	(3,2)	&	\cite{fliege2016sqp} (Table 1)	\\
7	&	CEC09\_8	&	(3,30)	&	\cite{fliege2016sqp} (Table 1)	&	38	&	MOP6	&	(2,4)	&	\cite{fliege2016sqp} (Table 1)	\\
8	&	CL1	&	(2,4)	&	\cite{fliege2016sqp} (Table 1)	&	39	&	P1	&	(2,5)	&	Appendix \ref{Test_Problems}	\\
9	&	Deb513	&	(2,2)	&	\cite{fliege2016sqp} (Table 1)	&	40	&	P2	&	(2,40)	&	Appendix \ref{Test_Problems}	\\
10	&	Deb521a\_a	&	(2,2)	&	\cite{fliege2016sqp} (Table 1)	&	41	&	P3	&	(2,7)	&	Appendix \ref{Test_Problems}	\\
11	&	Deb521b	&	(2,2)	&	\cite{fliege2016sqp} (Table 1)	&	42	&	P4a	&	(2,2)	&	Appendix \ref{Test_Problems}	\\
12	&	DTLZ1	&	(3,7)	&	\cite{fliege2016sqp} (Table 1)	&	43	&	P4b	&	(2,50)	&	Appendix \ref{Test_Problems}	\\
13	&	DTLZ1n2	&	(2,2)	&	\cite{fliege2016sqp} (Table 1)	&	44	&	P4c	&	(2,100)	&	Appendix \ref{Test_Problems}	\\
14	&	DTLZ2	&	(3,12)	&	\cite{fliege2016sqp} (Table 1)	&	45	&	P4d	&	(2,150)	&	Appendix \ref{Test_Problems}	\\
15	&	DTLZ2n2	&	(2,2)	&	\cite{fliege2016sqp} (Table 1)	&	46	&	P5a	&	(2,2)	&	Appendix \ref{Test_Problems}	\\
16	&	DTLZ3	&	(3,12)	&	\cite{fliege2016sqp} (Table 1)	&	47	&	P5b	&	(2,50)	&	Appendix \ref{Test_Problems}	\\
17	&	DTLZ3n2	&	(2,2)	&	\cite{fliege2016sqp} (Table 1)	&	48	&	P5c	&	(2,100)	&	Appendix \ref{Test_Problems}	\\
18	&	DTLZ5	&	(3,12)	&	\cite{fliege2016sqp} (Table 1)	&	49	&	P5d	&	(2,150)	&	Appendix \ref{Test_Problems}	\\
19	&	DTLZ5n2	&	(2,2)	&	\cite{fliege2016sqp} (Table 1)	&	50	&	P6a	&	(2,7)	&	Appendix \ref{Test_Problems}	\\
20	&	DTLZ6	&	(3,22)	&	\cite{fliege2016sqp} (Table 1)	&	51	&	P6b	&	(2,50)	&	Appendix \ref{Test_Problems}	\\
21	&	DTLZ6n2	&	(2,2)	&	\cite{fliege2016sqp} (Table 1)	&	52	&	P6c	&	(2,100)	&	Appendix \ref{Test_Problems}	\\
22	&	EP2	&	(2,2)	&	\cite{Evtushenko2013-ii}	&	53	&	P6d	&	(2,150)	&	Appendix \ref{Test_Problems}	\\
23	&	EX005	&	(2,2)	&	\cite{fliege2016sqp} (Table 1)	&	54	&	Shekel	&	(2,2)	&	\cite{Zilinskas2014-po}	\\
24	&	GE5	&	(3,3)	&	\cite{fliege2016sqp} (Table 1)	&	55	&	slcdt1	&	(2,2)	&	\cite{schutze2008convergence}	\\
25	&	IKK1	&	(3,2)	&	\cite{fliege2016sqp} (Table 1)	&	56	&	TP1	&	(2, 150)	&	\cite{gaspar2014evolutionary}	\\
26	&	Jin2\_a	&	(2,2)	&	\cite{fliege2016sqp} (Table 1)	&	57	&	TP3	&	(2, 100)	&	\cite{gaspar2014evolutionary}	\\
27	&	Jin3	&	(2,2)	&	\cite{fliege2016sqp} (Table 1)	&	58	&	TP4	&	(2, 100)	&	\cite{gaspar2014evolutionary}	\\
28	&	Jin4\_a	&	(2,2)	&	\cite{fliege2016sqp} (Table 1)	&	59	&	TP5	&	(2, 50)	&	\cite{gaspar2014evolutionary}	\\
29	&	lovison2	&	(2,2)	&	\cite{fliege2016sqp} (Table 1)	&	60	&	ZDT1	&	(2,30)	&	\cite{fliege2016sqp} (Table 1)	\\
30	&	lovison3	&	(2,2)	&	\cite{fliege2016sqp} (Table 1)	&	61	&	ZDT2	&	(2,30)	&	\cite{fliege2016sqp} (Table 1)	\\
31	&	lovison4	&	(2,2)	&	\cite{fliege2016sqp} (Table 1)	&	62	&	ZDT3	&	(2,30)	&	\cite{fliege2016sqp} (Table 1)	\\\hline

\end{tabular}
}
\caption{Details of the test problems.}
\label{table_1}
\end{table}

\paragraph*{Performance evaluation}

The proposed OMFFM (Algorithm~\ref{alg1}) is compared with MOSQCQP~\cite{ansary2021sqcqp} and the evolutionary algorithm NSGA-II~\cite{deb2002fast}. The comparison is conducted over the test suite listed in Table~\ref{table_1}. Performance is summarized using Dolan--Mor\'e performance profiles \cite{dolan2002benchmarking} based on five standard multi-objective indicators: purity, $\Delta$-spread, $\Gamma$-spread, hypervolume, and the total number of function evaluations. Further background on profile-based benchmarking and metric-driven comparisons can be found in \cite{ansary2020sequential, ansary2021sqcqp, fliege2016sqp, adhikary2024tunneling}. Figures~\ref{fig_purity}--\ref{fig_feval} report the resulting performance profiles.


\begin{figure}[H]
    \centering
    \begin{subfigure}[b]{0.45\textwidth}
        \centering
        \includegraphics[width=\textwidth,height=2.3cm]{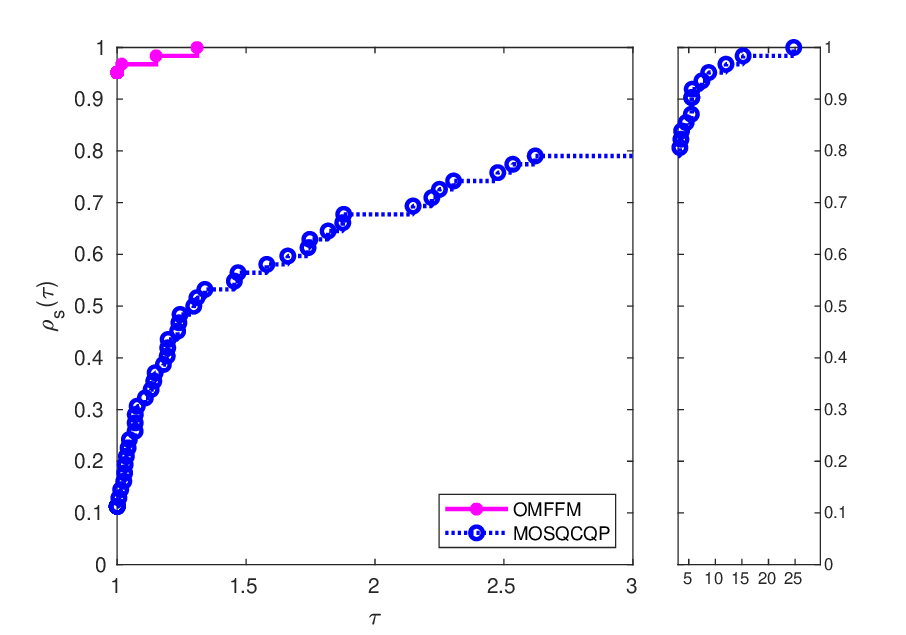}
        \caption{\raggedright Performance profile: OMFFM--MOSQCQP}
        \label{fig_purity_omffm_mosqcqp}
    \end{subfigure}
    \hfill
    \begin{subfigure}[b]{0.45\textwidth}
        \centering
        \includegraphics[width=\textwidth,height=2.3cm]{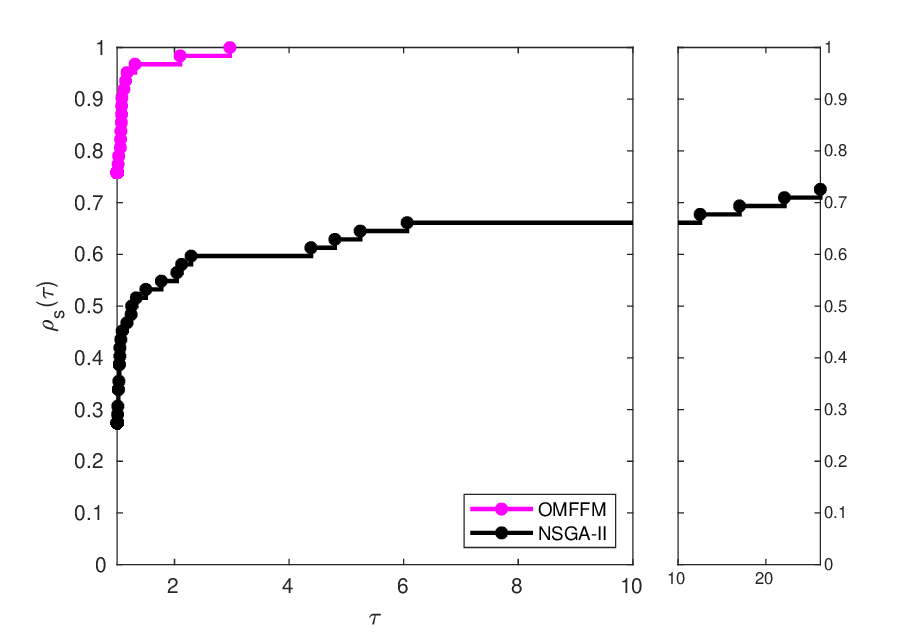}
        \caption{\raggedright Performance profile: OMFFM--NSGA-II}
        \label{fig_purity_omffm_nsga2}
    \end{subfigure}    
    \caption{Performance profiles based on purity metric}
    \label{fig_purity}
\end{figure}
\begin{figure}[H]
    \centering
    \begin{subfigure}[b]{0.45\textwidth}
        \centering
        \includegraphics[width=\textwidth,height=2.3cm]{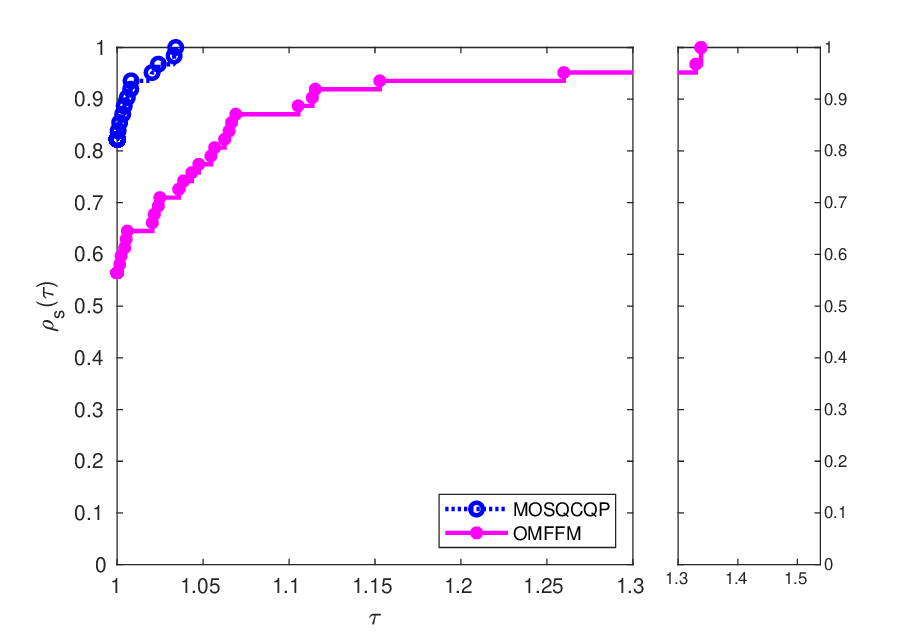}
        \caption{\raggedright Performance profile: OMFFM--MOSQCQP}
        \label{fig_delta_omffm_mosqcp}
    \end{subfigure}
    \hfill
    \begin{subfigure}[b]{0.45\textwidth}
        \centering
        \includegraphics[width=\textwidth,height=2.3cm]{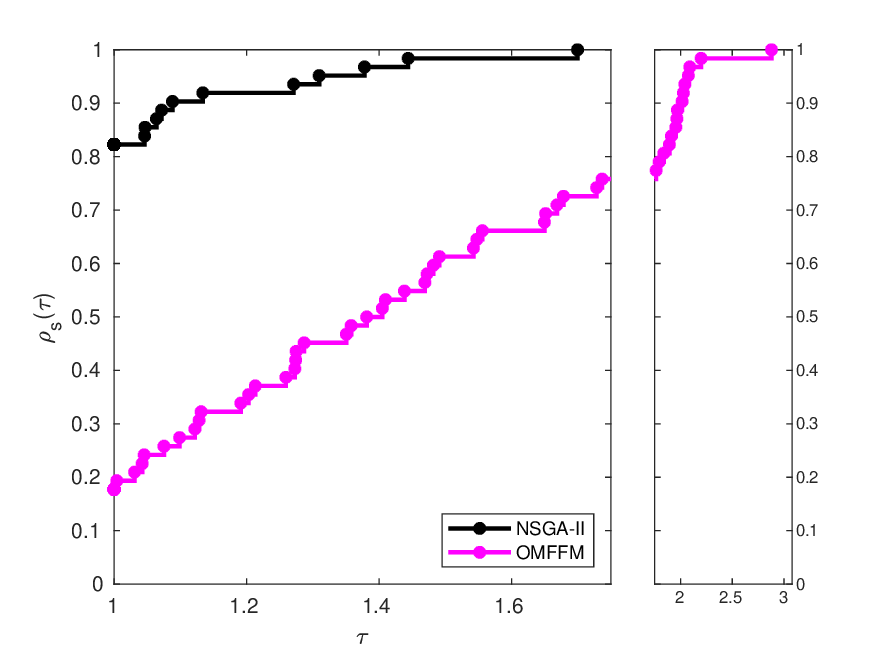}
        \caption{\raggedright Performance profile: OMFFM--NSGA-II}
        \label{fig_delta_omffm_nsga2}
    \end{subfigure}    
    \caption{Performance profiles based on $\Delta$ metric}
    \label{fig_delta}
\end{figure}
\begin{figure}[H]
    \centering
    \begin{subfigure}[b]{0.45\textwidth}
        \centering
        \includegraphics[width=\textwidth,height=2.3cm]{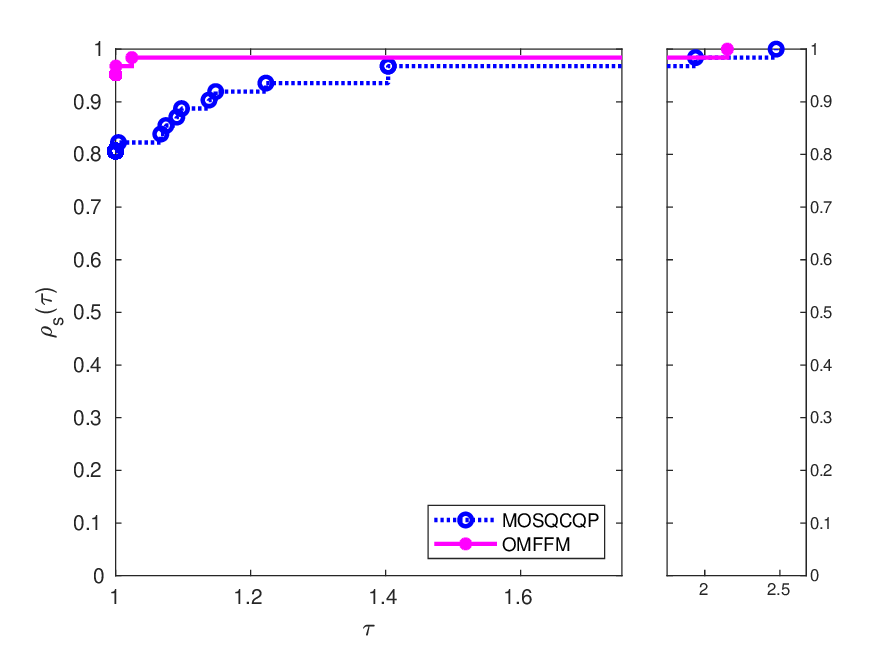}
        \caption{\raggedright Performance profile: OMFFM--MOSQCQP}
        \label{fig_gamma_omffm_mosqcqp}
    \end{subfigure}
    \hfill
    \begin{subfigure}[b]{0.45\textwidth}
        \centering
        \includegraphics[width=\textwidth,height=2.3cm]{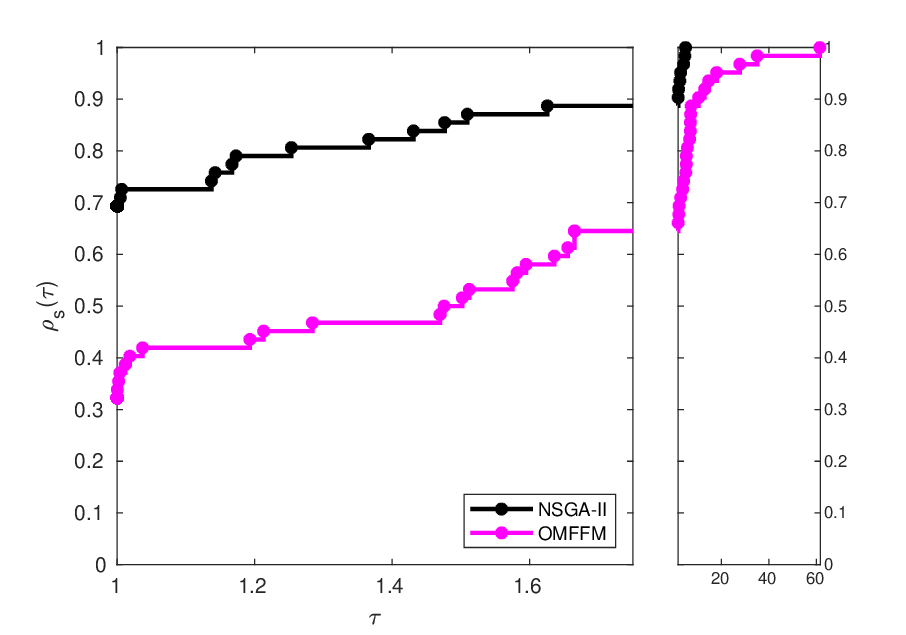}
        \caption{\raggedright Performance profile: OMFFM--NSGA-II}
        \label{fig_gamma_omffm_nsga2}
    \end{subfigure}    
    \caption{Performance profiles based on $\Gamma$ metric}
    \label{fig_gamma}
\end{figure}
\begin{figure}[H]
    \centering
    \begin{subfigure}[b]{0.45\textwidth}
        \centering
        \includegraphics[width=\textwidth,height=2.3cm]{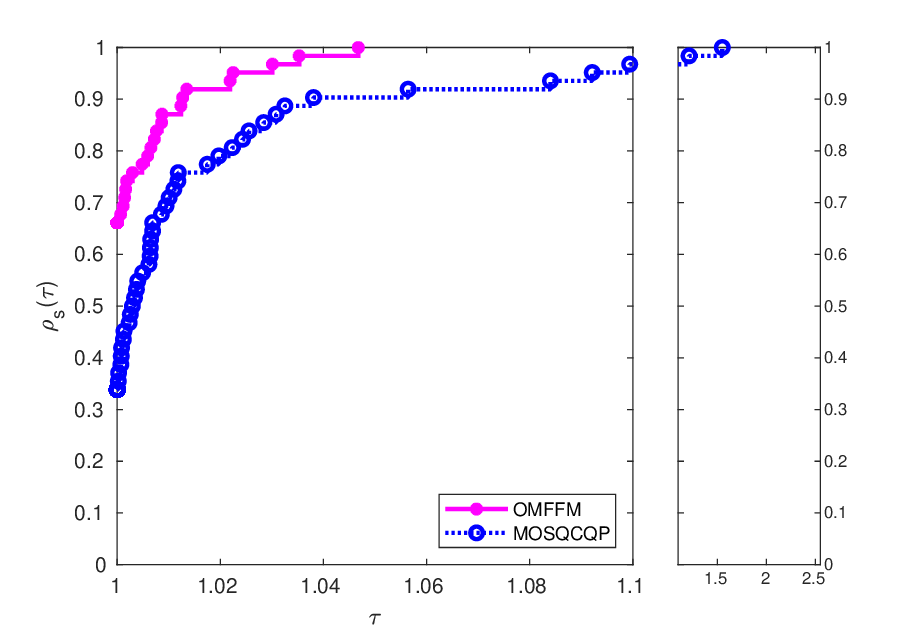}
        \caption{\raggedright Performance profile: OMFFM--MOSQCQP}
        \label{fig_hv_omffm_mosqcqp}
    \end{subfigure}
    \hfill
    \begin{subfigure}[b]{0.45\textwidth}
        \centering
        \includegraphics[width=\textwidth,height=2.3cm]{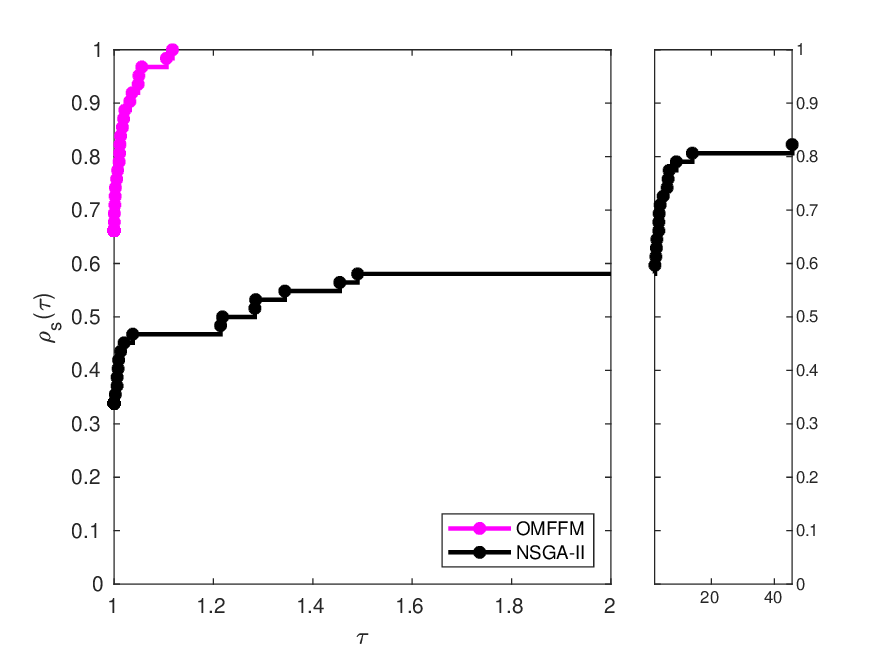}
        \caption{\raggedright Performance profile: OMFFM--NSGA-II}
        \label{fig_hv_omffm_nsga2}
    \end{subfigure}    
    \caption{Performance profiles based on hypervolume metric}
    \label{fig_hv}
\end{figure}
\begin{figure}[H]
    \centering
    \begin{subfigure}[b]{0.45\textwidth}
        \centering
        \includegraphics[width=\textwidth,height=2.3cm]{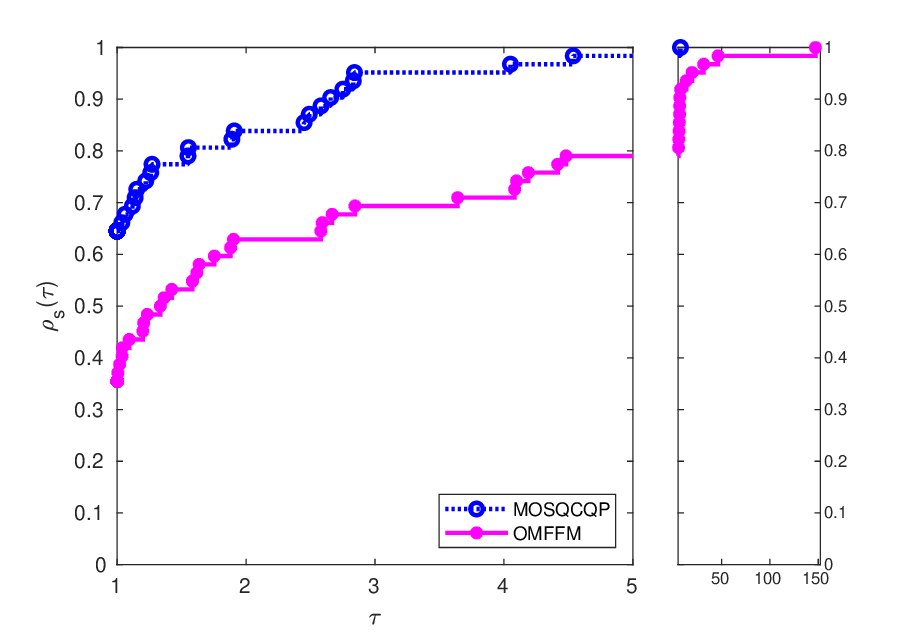}
        \caption{\raggedright Performance profile: OMFFM--MOSQCQP}
        \label{fig_feval_omffm_mosqcqp}
    \end{subfigure}
    \hfill
    \begin{subfigure}[b]{0.45\textwidth}
        \centering
        \includegraphics[width=\textwidth,height=2.3cm]{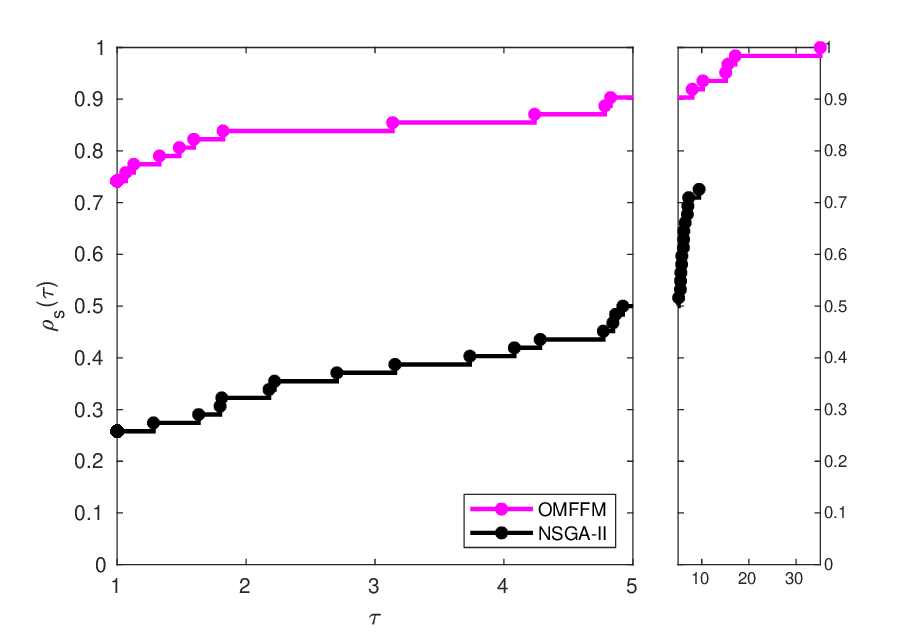}
        \caption{\raggedright Performance profile: OMFFM--NSGA-II}
        \label{fig_feval_omffm_nsga2}
    \end{subfigure}    
    \caption{Performance profiles based on the number of function evaluations}
    \label{fig_feval}
\end{figure}

Let $\mathcal{S}$ denote the set of methods under consideration and $\mathcal{P}$ the set of test problems.
For each $p\in\mathcal{P}$ and $s\in\mathcal{S}$, let $s_{p,s}$ denote the performance of method $s$ on problem $p$
with respect to a fixed indicator. The performance ratio is defined by $r_{p,s}=\frac{s_{p,s}}{\min_{s'\in\mathcal{S}} s_{p,s'}}$, so that the best method on problem $p$ satisfies $r_{p,s}=1$. The corresponding profile function is $$\rho_s(\tau)=\frac{\bigl|\{\,p\in\mathcal{P}:\ r_{p,s}\le \tau\,\}\bigr|}{|\mathcal{P}|}, \qquad \tau\ge 1.$$ The above equation gives the fraction of problems for which $s$ is within a factor $\tau$ of the best observed performance. Hence, higher curves indicate stronger overall performance.

The purity metric (Figure~\ref{fig_purity}) shows a clear advantage for OMFFM. The proposed method attains a large $\rho_s(1)$ and reaches $\rho_s(\tau)=1$ at relatively small $\tau$ in both comparisons, whereas MOSQCQP and NSGA-II require substantially larger ratios to achieve full coverage. For the $\Delta$-metric (Figure~\ref{fig_delta}), a spacing trade-off is observed: MOSQCQP and NSGA-II dominate for small $\tau$, while OMFFM approaches full coverage only for larger $\tau$, indicating
less uniform spacing on a subset of instances. The $\Gamma$-profiles in
Figure~\ref{fig_gamma} place OMFFM above MOSQCQP over most of the $\tau$-range and show earlier attainment of full coverage; however, NSGA-II exhibits stronger performance for small-to-moderate $\tau$, reflecting more consistent $\Gamma$-quality. For hypervolume (Figure~\ref{fig_hv}), OMFFM again achieves the highest profile over the informative
$\tau$-range and reaches full coverage sooner, reinforcing the convergence advantage suggested by purity. Finally, Figure~\ref{fig_feval} compares computational effort via
the number of function evaluations. Since MOSQCQP constitutes a core component of OMFFM's local phase, OMFFM is expected to require more evaluations than MOSQCQP; nonetheless, OMFFM clearly requires fewer function evaluations than NSGA-II.

Overall, the profiles indicate strong convergence behavior for OMFFM, as reflected by its dominance in purity and hypervolume, while also delivering improved $\Gamma$-performance relative to MOSQCQP. Although $\Delta$ suggests a trade-off in spacing uniformity, OMFFM
remains competitive and attains full coverage once modest performance tolerances are allowed. From a computational perspective, OMFFM retains practical efficiency by requiring fewer function evaluations than NSGA-II, and the higher evaluation budget relative to MOSQCQP is consistent with the added local-search component. Taken together, the evidence
across purity, $\Delta$, $\Gamma$, hypervolume, and function evaluations supports OMFFM as a robust and well-balanced method among the compared approaches for the considered multi-objective problems.

\section{Conclusion}\label{sec_con}
In this paper, a framework for multi-objective optimization is developed based on an auxiliary function, termed the multi-objective filled function. This function provides a mechanism to escape local weak efficient solutions and to generate globally efficient solutions in non-convex settings. The method requires no scalarization weights. It also avoids any kind of prior ordering or priority information among the objectives. Moreover, the filled function involves only one tuning parameter, which simplifies practical implementation. Using the established theoretical properties, an algorithm is derived to compute globally efficient solutions and to obtain an approximation of the Pareto front. The approach is assessed on widely used benchmark test problems and performance profiles are used to compare it with MOSQCQP and NSGA-II. The results indicate strong overall performance across the test suite, with particularly effective behavior on challenging non-convex problems. The analysis currently assumes smooth objectives. Future work will extend the approach to constrained and nonsmooth multi-objective optimization problems.



\section*{Funding}

This work was partially supported by the Prime Minister's Research Fellows (PMRF) Scheme, Government of India. Author Bikram Adhikary acknowledges support from the PMRF Scheme (PMRF ID: 2202747).


\section*{Statements and declarations}

\paragraph*{Conflict of interest}
The authors declare that they have no conflict of interest.

\paragraph*{Replication of results}
Implementation details of the proposed algorithm are explained in detail in Section \ref{sec_imp_exp}.  To facilitate reproducibility, the MATLAB implementation used to generate the results reported in this manuscript is available from the corresponding author upon reasonable request for non-commercial research use.



\paragraph*{Ethics approval and consent to participate}
Not applicable.


\paragraph*{Data availability}
No datasets were generated or analyzed in this paper.













\bibliographystyle{tfs}
\bibliography{Bikram_ref}

\appendix

\section{Details of test problems}\label{Test_Problems}
A number of the test problems in Table~\ref{table_1} are newly constructed in this work; they are given below.

\begin{enumerate}

\item P1 ($m=2$, $n=5$)
\begin{align*}
\min_{y \in \mathbb{R}^{5}} \quad & \left(\obj{1}(y),\,\obj{2}(y)\right) \\
\text{s.t.} \quad & -1 \le y_i \le 1,\quad \forall i \in \Idx{n}, \\
\text{where} \quad
& \obj{1}(y)= y_1^{2}-y_2^{2}+2y_3^{2}-3y_4^{2}, \\
& \obj{2}(y)= \sum_{i=1}^{5} y_i^{4}-\sum_{i=1}^{5} y_i^{2}.
\end{align*}

\item P2 ($m=2$, $n=40$)
\begin{align*}
\min_{y \in \mathbb{R}^{40}} \quad & \left(\obj{1}(y),\,\obj{2}(y)\right) \\
\text{s.t.} \quad & 0 \le y_i \le 1,\quad \forall i \in \Idx{n}, \\
\text{where} \quad 
& g(y)=\sum_{i=2}^{40}\left(y_i-\tfrac12\right)^{2}, \\
& \obj{1}(y)=\big(1+g(y)\big)\cos\!\left(\frac{\pi y_1}{2}\right), \\
& \obj{2}(y)=\big(1+g(y)\big)\sin\!\left(\frac{\pi y_1}{2}\right).
\end{align*}

\item P3 ($m=2$, $n=7$)
\begin{align*}
\min_{y \in \mathbb{R}^{n}} \quad & \left(\obj{1}(y),\,\obj{2}(y)\right) \\
\text{s.t.} \quad & 0 \le y_i \le 1,\quad \forall i \in \Idx{n}, \\
\text{where} \quad 
& g(y)=\sum_{i=2}^{n}\left(y_i-\tfrac12\right)^{2}, \\
& \obj{1}(y)=\big(1+g(y)\big)\, y_1\Big(1+0.2\sin\!\big(10\pi y_1\big)\Big), \\
& \obj{2}(y)=\big(1+g(y)\big)\, (1-y_1)\Big(1+0.2\cos\!\big(10\pi y_1\big)\Big).
\end{align*}

\item P4 ($m=2$, $n \in \{2,50,100,150\}$)
\begin{align*}
\min_{y \in \mathbb{R}^{n}} \quad & \left(\obj{1}(y),\,\obj{2}(y)\right) \\
\text{s.t.} \quad & 0 \le y_i \le 1,\quad \forall i \in \Idx{n}, \\
\text{where} \quad
& \obj{1}(y)= y_1, \\
& \obj{2}(y)= g(y)\left(1-\left(\frac{y_1}{g(y)}\right)^2-0.3\,y_1\sin(10\pi y_1)\right), \\
& g(y)=1+\frac{4}{n-1}\sum_{i=2}^{n} y_i^2.
\end{align*}

\item P5 ($m=2$, $n \in \{2,50,100,150\}$)
\begin{align*}
\min_{y \in \mathbb{R}^{n}} \quad & \left(\obj{1}(y),\,\obj{2}(y)\right) \\
\text{s.t.} \quad & 0 \le y_i \le 1,\quad \forall i \in \Idx{n}, \\
\text{where} \quad
& \obj{1}(y)= y_1, \\
& \obj{2}(y)= g(y)\left(1-\frac{y_1}{g(y)}-0.4\,e^{-5y_1}\sin(8\pi y_1)\right), \\
& g(y)=1+\frac{3}{n-1}\sum_{i=2}^{n} y_i.
\end{align*}

\item P6 ($m=3$, $n \in \{7,50,100,150\}$)
\begin{align*}
\min_{z \in \mathbb{R}^n} \quad & \left( \obj{1}(z),\; \obj{2}(z),\; \obj{3}(z) \right) \\
\text{s.t.} \quad & 0 \le z_i \le 1, \quad \forall i \in \Idx{n}, \\
\text{where} \quad
& g(z) = 1 + 10 \sum_{j=1}^{\lfloor n/2 \rfloor} z_{2j}
          + 5 \sum_{j=1}^{\lfloor (n-1)/2 \rfloor} z_{2j+1}, \\
& r(z) = \frac{z_1}{g(z)}, \\
& \obj{1}(z) = z_1, \\
& \obj{2}(z) = g(z)\left( 1 - r(z)^2 - r(z)\sin(8\pi z_1) \right), \\
& \obj{3}(z) = g(z)\left( 1 - r(z)^2 - r(z)\cos(12\pi z_1) \right).
\end{align*}
\end{enumerate}

\end{document}